\definecolor{dark-gray}{gray}{0.3}
\definecolor{dkgray}{rgb}{.4,.4,.4}
\definecolor{dkblue}{rgb}{0,0,.5}
\definecolor{medblue}{rgb}{0,0,.75}
\definecolor{rust}{rgb}{0.5,0.1,0.1}
\newtheorem{theorem}{Theorem}
\newtheorem{lemma}[theorem]{Lemma}
\theoremstyle{definition}
\newtheorem{remark}[theorem]{Remark}
\newtheorem{example}{Example}
\numberwithin{equation}{section}
\numberwithin{theorem}{section}
\renewcommand{\phi}{\varphi}
\DeclareFontFamily{OT1}{pzc}{}
\DeclareFontShape{OT1}{pzc}{m}{it}{<-> s * [1.200] pzcmi7t}{}
\DeclareMathAlphabet{\mathpzc}{OT1}{pzc}{m}{it}
\DeclareMathOperator{\dist}{dist}
\DeclareMathOperator{\diag}{diag}
\DeclareMathOperator{\blkdiag}{blkdiag}
\DeclareMathOperator{\dom}{dom}
\DeclareMathOperator{\prox}{prox}
\DeclareMathOperator{\gra}{gra}
\newtheorem{assumption}{Assumption \!\!}
\newcommand{\bs}{\boldsymbol}
\newcommand{\Dsigma}{\sigma}
\newtheorem*{rep@theorem}{\rep@title}
\newcommand{\newreptheorem}[2]{%
	\newenvironment{rep#1}[1]{%
		\def\rep@title{#2 \ref{##1}}%
		\begin{rep@theorem}}%
		{\end{rep@theorem}}}
\crefname{lemma}{lemma}{lemmas}
\crefname{assumption}{assumption}{assumptions}
\title{On the convergence of stochastic primal-dual hybrid gradient}
\author{Ahmet Alacaoglu$^\mathsection$
$\quad$ Olivier Fercoq$^\dagger$
$\quad$ Volkan Cevher$^\ddagger$ 
\\ \\
$^\mathsection$University of Wisconsin-Madison, USA \\
$^\dagger$LTCI, T\'el\'ecom Paris,  Institut Polytechnique de Paris, France\\
$^\ddagger$LIONS, Ecole Polytechnique F\'ed\'erale de Lausanne, Switzerland 
}
\begin{document}

\maketitle

% REQUIRED
\begin{abstract}
In this paper, we analyze the recently proposed stochastic primal-dual hybrid gradient (SPDHG) algorithm and provide new theoretical results.
In particular, we prove almost sure convergence of the iterates to a solution with convexity and linear convergence with further structure, using standard step sizes independent of strong convexity or other regularity constants.
In the general convex case, we also prove the $\mathcal{O}(1/k)$ convergence rate for the ergodic sequence, on expected primal-dual gap function.
Our assumption for linear convergence is metric subregularity, which is satisfied for strongly convex-strongly concave problems in addition to many nonsmooth and/or nonstrongly convex problems, such as linear programs, Lasso, and support vector machines.
We also provide numerical evidence showing that SPDHG with standard step sizes shows a competitive practical performance against its specialized strongly convex variant SPDHG-$\mu$ and other state-of-the-art algorithms including variance reduction methods. 
\end{abstract}

\section{Introduction}

Stochastic primal-dual hybrid gradient (SPDHG) algorithm is proposed by Chambolle et al.~\cite{chambolle2018stochastic}, for solving the optimization problem
\begin{equation}\label{eq: prob_temp}
\min_{x\in\mathcal{X}} \sum_{i=1}^n f_i(A_i x) + g(x),
\end{equation}
where $f_i\colon \mathcal{Y}_i \to \mathbb{R} \cup \{+\infty\}$ and $g\colon \mathcal{X} \to \mathbb{R} \cup \{+\infty\}$ are proper, lower semicontinuous (l.s.c.), convex functions and $f$ is defined as the separable function such that $f(y)=\sum_{i=1}^n f_i(y_i)$.
$A_i\colon \mathcal{X} \to \mathcal{Y}_i$ is a linear mapping and $A$ is defined such that $(Ax)_i = A_i x$.

The classical approaches provide numerical solutions to \eqref{eq: prob_temp} via primal-dual methods. 
In particular, a common strategy is to have coordinate-based updates for the separable dual variable~\cite{zhang2017stochastic, chambolle2018stochastic}.
These methods show competitive practical performance and are proven to converge linearly under the assumption that $f_i^\ast, \forall i$ and $g$ are $\mu_i$ and $\mu_g$-strongly convex functions, respectively.
Step sizes of these methods in turn depend on $\mu_i, \mu_g$ to obtain linear convergence. SPDHG belongs to this class. 

Chambolle et al.\ provide convergence analysis for SPDHG under various assumptions on the problem template~\cite{chambolle2018stochastic}. Indeed, 
SPDHG is a variant of celebrated primal-dual hybrid gradient (PDHG) method~\cite{chambolle2011first, chambolle2016ergodic} where the main difference is stochastic block updates for dual variables at each iteration.
In the general convex case,~\cite{chambolle2018stochastic} proved that a particular Bregman distance between the iterates of SPDHG and any primal-dual solution converges almost surely to $0$ and the ergodic sequence has a $\mathcal{O}(1/k)$ rate for this quantity. Note however that this result does not imply the almost sure convergence of the sequence to a solution, in general.
However, this result does not give guarantees on the expected primal-dual gap function (see~\eqref{eq: pd_gap},~\eqref{eq: h_def}), which is the standard optimality measure.
If $f_i^\ast$ and $g$ are strongly convex functions, SPDHG-$\mu$, which is a variant of SPDHG with step sizes depending on strong convexity constants, is proven to converge linearly~\cite[Theorem 6.1]{chambolle2018stochastic}.
Estimation of strong convexity constants can be challenging in practice, restricting the use of SPDHG-$\mu$.

Since its introduction, SPDHG has been popular in practice, especially in computational imaging, with implementations in different software packages~\cite{ehrhardt2017faster,papoutsellis2021core,liu2019real,kazantsev2019versatile}.
Despite the practical interest, fundamental theoretical results regarding the convergence of SPDHG remained open, including almost sure convergence, $\mathcal{O}(1/k)$ convergence rate for expected primal-dual gap and adaptive linear convergence.

In its most basic form, step sizes of SPDHG are determined using $\| A_i \|$ and probabilities of selecting coordinates~\cite{chambolle2018stochastic}.
It is often observed in practice that the last iterate of PDHG or SPDHG with these step sizes has competitive practical performance. Yet, only ergodic rates are known for this method with restrictive assumptions~\cite{chambolle2016ergodic, chambolle2018stochastic}.
In this paper, we analyze SPDHG with standard step sizes and provide new theoretical results, paving the way for explaining its fast convergence behavior in practice.

\subsection{Our contributions} We prove the following results for SPDHG:
\paragraph{General convex case} We prove that the iterates of SPDHG converge almost surely to a solution.
For this purpose, we introduce a representation of SPDHG as a fixed point operator in a duplicated space. For the ergodic sequence, 
we show that SPDHG has $\mathcal{O}(1/k)$ rate of convergence for the expected primal-dual gap.
To prove this result, we introduce a generic technique that is applicable to other stochastic primal-dual coordinate descent algorithms.
Moreover, we prove the same rate for objective residual and feasibility for linearly constrained problems.
\paragraph{Metrically subregular case} When the problem is metrically subregular (see~Section~\ref{sec: ms}), we prove that SPDHG has linear convergence with standard step sizes, depending only on $A_i$ and probabilities for selecting coordinates.
Our result shows that without any modification, basic SPDHG adapts to problem structure and attains linear rate when the assumption holds.
\paragraph{Practical performance} We show that SPDHG shows a robust and competitive practical performance compared to SPDHG-$\mu$ of~\cite{chambolle2018stochastic} and other state-of-the-art methods including variance reduction and primal-dual coordinate descent methods.

We summarize our results and compare with those of~\cite{chambolle2018stochastic} in~\Cref{tab:2} (Page \pageref{tab:2}).

\section{Preliminaries}\label{sec: prelim}
\subsection{Notation}\label{sec: notation}
We assume that $\mathcal{X}$ and $\mathcal{Y}$ are Euclidean spaces and that $\mathcal{Y} = \prod_{i=1}^n \mathcal{Y}_i$.
We define $\mathcal{Z} = \mathcal{X} \times \mathcal{Y}$ and $z=(x, y) \in \mathcal{Z}$.
For positive definite $Q$, we use $\langle x, y \rangle_Q = \langle Qx, y\rangle$ for denoting weighted inner product and $\| x\|_Q^2 = \langle Qx, x \rangle$ for weighted Euclidean norm.
We overload these notations to also write for a vector $\sigma$ with $\sigma_i > 0$, $\|y\|_\sigma^2 = \langle y, \diag(\sigma) y \rangle$.
For a set $\mathcal{C}$, and positive definite $Q$, distance of a point $x$ to $\mathcal{C}$, measured in $\|\cdot\|_Q$ is defined as $\dist^2_Q(x, \mathcal{C}) = \min_{y\in \mathcal{C}} \| x-y\|^2_Q = \|x-\mathcal{P}_{\mathcal{C}}^Q(x)\|_Q^2$, where we have defined the projection operator $\mathcal{P}$ implicitly.
When $Q=I$, we drop the subscript and write $\dist(x, \mathcal{C})$.
For $\sigma \in \mathbb{R}^n$, we use the elementwise inverse $\sigma^{-1} = (\sigma_1^{-1}, \dots, \sigma_n^{-1})$.
Domain of a function $h$ is denoted as $\dom{h}$. 
We encode constraints using the indicator function: $\delta_{\{b\}}(x) = 0$ if $x=b$ and $\delta_{\{b\}}(x) = +\infty$ if $x \neq b$.

Given a vector $x$, we access $i^{\text{th}}$ element as $x_i$. We define $e(i)\in\mathcal{Y}$ such that $e(i)_{j} = 1$, if $j=i$ and $e(i)_j = 0$, if $j\neq i$.
Moreover, we use $E(i) = e(i) e(i)^\top$.
Unless used with a subscript, $1$ in Kronecker products denotes $1_n\in\mathbb{R}^n$, all-ones vector.

Given a vector $x\in\mathcal{X}$, we use bold symbol $\bs x$ to denote the duplicated version of this vector, which consists of $n$ \emph{copies} of $x$, and the corresponding space is denoted by $\bs{\mathcal{X}}=\mathcal{X}^n$. Similarly, the duplicated dual space is $\bs{\mathcal{Y}}=\mathcal{Y}^n$ and $\bs{\mathcal{Z}}=\bs{\mathcal{X}}\times\bs{\mathcal{Y}}$. The copies might be the same, or different, depending on how $\bs x$ is set. To access $i^{\text{th}}$ copy, we use the notation $\bs x(i) \in \mathcal{X}$.
For the operator $T\colon \bs{\mathcal{Z}} \to \bs{\mathcal{Z}}$, and a duplicated vector $\bs q\in\bs{\mathcal{Z}}$, we denote the output as $T(\bs q) = \binom{T_x(\bs q)}{T_y(\bs q)}$. For example, $i^{\text{th}}$ primal copy is denoted as $T_x(\bs q)(i) \in\mathcal{X}$. Similarly, for the $i^{\text{th}}$ primal copy in $\bs q$, we use $\bs q_x(i)\in\mathcal{X}$. To access $i^{\text{th}}$ primal and dual copies, we use $\bs q(i) \in\mathcal{Z}$.

For example, when we pick one coordinate at a time, we can set $\mathcal{X}=\mathbb{R}^d$, $\mathcal{Y}=\mathbb{R}^n$, which would result in the duplicated spaces $\bs{\mathcal{X}}=\mathbb{R}^{dn}$, $\bs{\mathcal{Y}}=\mathbb{R}^{n^2}$, and $\bs{\mathcal{Z}} = \mathbb{R}^{dn+n^2}$.

Probability of selecting an index $i\in\{1, \dots, n\}$ is denoted as $p_i > 0$, with $\sum_{i=1}^n p_i = 1$.
We  define $P=\diag(p_1, \dots, p_n)$ and $\underline{p} = \min_i p_i$.
Notation $\mathcal{F}_k$ defines the filtration generated by randomly selected indices $\{ i_1, \dots, i_{k-1} \}$.
Let $\mathbb{E}_k\left[\cdot\right] := \mathbb{E}\left[ \cdot \mid \mathcal{F}_k \right]$ denote the conditional expectation with respect to $\mathcal{F}_k$.

The proximal operator of a function $h$ is defined as
\begin{equation}\label{eq: prox_def}
\text{prox}_{\tau, h} (x) = \arg\min_{u\in\mathcal{X}} h(u) + \frac{1}{2} \| u-x\|^2_{\tau^{-1}}.
\end{equation}
The Fenchel conjugate of $h$ is defined as
$h^\ast(y) = \sup_{z\in \mathcal{X} } \langle z, y\rangle - h(z).$
\subsection{Solution}
Using Fenchel conjugate,~\eqref{eq: prob_temp} is cast as the saddle point problem
\begin{equation}\label{eq: prob_minmax}
\min_{x\in\mathcal{X}} ~ \sup_{y\in\mathcal{Y}}~ \sum_{i=1}^n \langle A_i x, y_i \rangle - f^\ast_i(y_i) + g(x).
\end{equation}
A primal-dual solution $(x^\star, y^\star) \in \mathcal{Z^\star}$ is characterized as
\begin{equation}\label{eq: kkt}
0\in\begin{bmatrix}A^\top y^\star + \partial g(x^\star) \\ Ax^\star - \partial f^\ast(y^\star) \end{bmatrix} = F(x^\star, y^\star).
\end{equation}
Given the functions $g$ and $f^\ast$ as in~\eqref{eq: prob_minmax}, we define
\begin{align}
&D_g({x}; \bar{z}) = g({x}) - g(\bar{x}) + \langle A^\top \bar{y}, {x}-\bar{x} \rangle, \label{eq: bregman_g}\\
&D_{f^\ast}({y}; \bar{z}) = f^\ast({y}) - f^\ast(\bar{y}) - \langle A \bar{x}, {y}-\bar{y} \rangle. \label{eq: bregman_f}
\end{align}

When $\bar{z}=z^\star=(x^\star, y^\star)$, with $z^\star$ denoting a primal-dual solution as defined in~\eqref{eq: kkt}, we have that~\eqref{eq: bregman_g} and~\eqref{eq: bregman_f} are Bregman distances generated by functions $g(x)$ and $f^\ast(y)$.
In this case, these Bregman distances measure the distance between ${x}$ and $x^\star$, and ${y}$ and $y^\star$, respectively.
Given $z$, $D_h(z; z^\star)$ is the Bregman distance generated by $h(z)=g(x)+f^\ast(y)$, to measure the distance between $z$ and $z^\star$.
Moreover, the primal-dual gap function can be written as $G(z) = \sup_{\bar{z}\in\mathcal{Z}}D_{f^\ast}(x; \bar{z}) + D_g(y; \bar{z})$.

\subsection{Metric subregularity}\label{sec: ms}
For Euclidean spaces $\mathcal{U}, \mathcal{V}$ and a set valued mapping $F\colon \mathcal{U} \rightrightarrows \mathcal{V}$, we denote the graph of $F$ by $\gra F = \{ (u, v) \in \mathcal{U} \times \mathcal{V} \colon v \in Fu \}$.
We say that $F$ is metrically subregular at $\bar{u}$ for $\bar{v}$, with $(\bar{u}, \bar{v}) \in \gra F$, if there exists $\eta_0 > 0$ with a neighborhood of subregularity $\mathcal{N}(\bar{u})$ such that:
\begin{equation}\label{eq: def_ms}
\dist(u, F^{-1} \bar{v}) \leq \eta_0 \dist(\bar{v}, Fu),~~ \forall u \in \mathcal{N}(\bar{u}).
\end{equation}
If $\mathcal{N}(\bar{u})=\mathcal{U}$, then $F$ is globally metrically subregular~\cite{dontchev2009implicit}.
Absence of metric subregularity is signaled by $\eta_0 = +\infty$.
This assumption is used in the context of deterministic and stochastic primal-dual algorithms in~\cite{liang2016convergence, drusvyatskiy2018error, latafat2019new}.

In the paper we shall study how the metric subregularity of the Karush-Kuhn-Tucker (KKT) operator $F$ in~\eqref{eq: kkt} implies linear convergence of SPDHG.

Metric subregularity of $F$ holds in following cases: 
\begin{enumerate}
	\item $f_i^\ast$ and $g$ are strongly convex functions, since $\mathcal{N}(\bar z) = \mathcal{Z}$. 
	\item The problem~\eqref{eq: prob_temp} is  defined with piecewise linear quadratic (PLQ) functions and $\dom g$ and $\dom f^\ast$ are compact sets, in which case $\mathcal{N}(\bar z)=\dom g \times \dom f^\ast$. In particular the domain of a PLQ function can be represented as the union of finitely many polyhedral sets and in each set, the function is a quadratic~(see~\cite[Definition IV.3]{latafat2019new}). Problems with PLQ functions include Lasso, support vector machines, linear programs, etc.
\end{enumerate}
\begin{remark}\label{rem: cmp_plq}
In the first example above, compact domains are not needed since metric subregularity holds globally for these problems.
One can also relax strong convexity in the first case to weaker conditions (see~\cite{latafat2018primal}).
Importantly, compact domain assumption is only needed in the second example mentioned above in this paper, for PLQs.
The reason, as we see in Theorem~\ref{eq: thm_lin} is the lack of control on the low probability event that the trajectory makes an excursion far away.
The same assumption for proving linear convergence of another primal-dual coordinate descent method is also needed in~\cite{latafat2019new}.
\end{remark}

\section{Algorithm}
The algorithm SPDHG is given as~\Cref{alg: main_spdhg}.
\begin{algorithm}
\begin{algorithmic}
\STATE \textbf{Input:} Pick step sizes $\sigma_i, \tau$ by~\eqref{eq: ss_rules} and~$x^{0} \in \mathcal{X}$, $y^0=y^1=\bar{y}^1 \in \mathcal{Y}$. Given $P=\diag(p_1, \dots, p_n)$.
\FOR{$k=1, 2, \dots$}
\STATE $x^{k} = \text{prox}_{\tau, g}(x^{k-1} -\tau A^\top \bar{y}^k)$
\STATE Draw $i_k\in \{1,\dots, n\}$ such that $\Pr(i_k = i) = p_i$.
\STATE $y^{k+1}_{i_k} = \text{prox}_{\sigma_{i_k}, f_{i_k}^\ast}(y^k_{i_k} + \sigma_{i_k} A_{i_k} x^{k})$
\STATE $y^{k+1}_{i} = y^k_i,~~~ \forall i \neq i_k$ 
\STATE $\bar{y}^{k+1} = y^{k+1} + P^{-1} (y^{k+1} - y^k)$,
\ENDFOR
\end{algorithmic}
\caption{Stochastic PDHG (SPDHG) \cite[Algorithm 1]{chambolle2018stochastic}}
\label{alg: main_spdhg}
\end{algorithm}

\begin{remark}
We use serial sampling of blocks in our analysis for the ease of notation.
We can extend our results with other samplings by using expected separable overapproximation (ESO) inequality as in~\cite{chambolle2018stochastic}.
\end{remark}

We use the standard step size rules for primal and dual step sizes~\cite{chambolle2018stochastic}:
\begin{equation}\label{eq: ss_rules}
p_i^{-1}\tau\sigma_i \| A_i \|^2 \leq \gamma^2 < 1.
\end{equation}

\begin{assumption}\label{as: asmp1} We have the following assumptions concerning~\eqref{eq: prob_temp}.
\begin{enumerate}
\item $f_i$ and $g$ are proper, lower semicontinuous (l.s.c.), convex functions.
\item The set of solutions to~\eqref{eq: prob_temp} is nonempty.
\item Slater's condition holds, namely $0\in \mathrm{ri} (\dom{f} - A\dom{g})$ where $\mathrm{ri}$ stands for relative interior~\cite{bauschke2011convex}.
\end{enumerate}
\end{assumption}
Slater's condition is a standard sufficient assumption for strong duality, used frequently for primal-dual methods~\cite{bauschke2011convex, chambolle2011first, chambolle2018stochastic, latafat2019new, tran2018smooth, fercoq2019coordinate}. Strong duality ensures that a dual solution exists in~\eqref{eq: prob_minmax} and the set of primal-dual solutions is characterized by~\eqref{eq: kkt}.

\section{Convergence}
We start with a lemma analyzing one iteration behavior of the algorithm.
This lemma is essentially the same as~\cite[ Lemma 4.4]{chambolle2018stochastic} up to minor modifications and is included for completeness, with its proof in Section~\ref{sec: pf_first_lem}.
We first introduce some notations.
\begin{equation}\label{eq: v_vk_defin_main}
\begin{aligned}
V(z) &= \frac{1}{2} \| x\|^2_{\tau^{-1}} + \frac{1}{2} \| y\|^2_{\sigma^{-1}P^{-1}} + \langle Ax, P^{-1}y \rangle, \\
V_k(x, y) &= \frac{1}{2} \| x \|^2_{\tau^{-1}} -  \langle Ax, P^{-1}(y^k - y^{k-1}) \rangle + \frac{1}{2} \| y^k - y^{k-1}\|^2_{\sigma^{-1}P^{-1}}
+ \frac{1}{2} \| y \|^2_{\sigma^{-1}P^{-1}}.
\end{aligned}
\end{equation}
We also define the full dimensional dual update
\begin{equation}
\hat{y}^{k+1}_i = \prox_{\sigma_i, f^\ast_i}(y_i^k + \sigma_i A_i x^k),~~~\forall i \in \{1, \dots, n \}.\label{eq: haty_defin}
\end{equation}
\begin{lemma}\label{lem: one_iteration}
Let~\Cref{as: asmp1} hold.
It holds for SPDHG that, $\forall x\in\mathcal{X}, \forall y\in\mathcal{Y}$,
\begin{equation}\label{eq: one_it_lem}
D_g(x^k; z) + D_{f^\ast}(\hat{y}^{k+1}; z) \leq V_k(x^{k-1}-x, y^k - y) 
-\mathbb{E}_k \left[ V_{k+1}(x^k-x, y^{k+1}-y) \right] - V(z^k - z^{k-1}).
\end{equation}
Moreover, with $C_1 = 1-\gamma$, under the step size rules in~\eqref{eq: ss_rules}, we have
\begin{align}
V(z^k - z^{k-1}) &\geq C_1 \left( \frac{1}{2} \| x^k - x^{k-1}\|^2_{\tau^{-1}} + \frac{1}{2} \| y^k - y^{k-1} \|^2_{\sigma^{-1}P^{-1}} \right), \label{eq: v_lw_bd} \\
V_k(x, y) &\geq C_1 \left( \frac{1}{2} \| x \|^2_{\tau^{-1}} + \frac{1}{2} \| y^k - y^{k-1}\|^2_{\Dsigma^{-1}P^{-1}} \right) + \frac{1}{2} \| y\|^2_{\Dsigma^{-1}P^{-1}}.\label{eq: vk_lw_bd}
\end{align}
\end{lemma}

\subsection{Almost sure convergence}\label{sec: as}
In this section, we present the almost sure convergence of the iterates of SPDHG to a solution of~\eqref{eq: prob_temp}.

We start by introducing an equivalent representation of SPDHG that is instrumental in our proofs.
The motivation of this representation can be seen as similar to~\cite{he2012convergence}, where the focus was on PDHG.
In particular, this representation shifts the primal update so that the algorithm can be written as a fixed point operator.
Since $\bar{y}^{k+1}$ depends on the selected index $i_k$ at iteration $k$, the operator $T$ is defined such that all the possible values of $\bar{y}^{k+1}$ and consequently, of $x^{k+1}$ are captured.
\begin{lemma}\label{lem: defT}
Let us define $T\colon \bs{\mathcal{Z}} \to \bs{\mathcal{Z}}$ that to $(\bs x,\bs y)$ associates $(\bs {\hat x}, \bs{\hat y})$ such that $\forall i\in\{1, \dots, n\}$,
\begin{align*}
& \bs{\hat y}(i) = {\prox}_{\sigma, f^\ast}(\bs{y}(i) + \diag(\sigma) A \bs{x}(i)) \\
& \bs{\bar{y}}(i) = \bs{y}(i) + (1 + p_i^{-1}) (\bs{\hat y} (i)_i - \bs{y}(i)_i)e(i) \\
& \bs{\hat x}(i) = {\prox}_{\tau, g}(\bs{x}(i) - \tau A^\top \bs{\bar{y}}(i))
\end{align*}
where $\bs{x}(i)\in\mathcal{X}$, $\bs{y}(i) \in \mathcal{Y}$.

The fixed points of $T$ are of the form $(\bs x(i), \bs y(i))$ such that $(\bs x(i), \bs y(i))\in\mathcal{Z}^\star$, $\forall i\in\{1, \dots, n\}$. Moreover, 
\begin{equation}
\left(x^{k+1}, \hat{y}^{k+1}\right) = \left(T_x(1 \otimes x^k, 1\otimes y^k)(i_k), T_y(1\otimes x^k, 1\otimes y^k)(1)\right).\notag
\end{equation}
We also denote 
\begin{align}
\bar{S} &= \blkdiag(\tau^{-1}I_{dn\times dn}, I_{n\times n}\otimes \sigma^{-1}), \notag \\
\bar{P} &= \blkdiag(p_1 I_{d\times d}, \dots, p_n I_{d\times d}, p_1 I_{n\times n}, \dots, p_n I_{n\times n}).\notag
\end{align}
We then have,
\[
\|T(1\otimes x^k, 1\otimes y^k) - (1\otimes x^k, 1\otimes y^k)\|_{\bar{S}\bar{P}}^2 = \mathbb E_k\left[\|x^{k+1} - x^k\|^2_{\tau^{-1}} + \|y^{k+1} - y^k \|_{\Dsigma^{-1}P^{-1}}^2\right]\!.
\]
\end{lemma}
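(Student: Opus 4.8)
The plan is to verify the three assertions of Lemma~\ref{lem: defT} by directly unwinding the definitions of $T$ and of Algorithm~\ref{alg: main_spdhg}; since $f^\ast$ and $g$ are proper, lsc and convex, every proximal operator appearing is single-valued, so $T$ is a genuine map and the only analytic input is the first-order optimality condition of the (weighted) prox.

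\textbf{Fixed points.} First I would characterize the zeros of $T-\mathrm{Id}$. If $(\bs x,\bs y)$ is fixed by $T$, then $\hat y(i)=y(i)$ for every $i$; in particular $\hat y(i)_i-y(i)_i=0$, so the definition of $\bar y(i)$ collapses to $\bar y(i)=y(i)$, and then $\hat x(i)=\prox_{\tau,g}(x(i)-\tau A^\top y(i))=x(i)$. The optimality condition of $\prox_{D(\sigma),f^\ast}$ at $y(i)+D(\sigma)Ax(i)$ with minimizer $y(i)$ reads $Ax(i)\in\partial f^\ast(y(i))$, and that of $\prox_{\tau,g}$ at $x(i)-\tau A^\top y(i)$ with minimizer $x(i)$ reads $-A^\top y(i)\in\partial g(x(i))$; together these are exactly the inclusions in~\eqref{eq: kkt}, so $(x(i),y(i))\in\mathcal{Z}^\star$. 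The converse is the same chain read backwards: if $(x(i),y(i))\in\mathcal{Z}^\star$ for all $i$, the two inclusions force $\hat y(i)=y(i)$, then $\bar y(i)=y(i)$, then $\hat x(i)=x(i)$.

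\textbf{One step of SPDHG.} Next I would feed $(1\otimes x^k,1\otimes y^k)$ into $T$. Because $f^\ast$ is separable, $\prox_{D(\sigma),f^\ast}$ acts coordinatewise, so in every block $\hat y(i)$ equals the full-dimensional dual update $\hat y^{k+1}$ of Lemma~\ref{lem: one_iteration}; this already gives $T(1\otimes x^k,1\otimes y^k)_{y(1)}=\hat y^{k+1}$. The key identity is that in SPDHG only coordinate $i_k$ of the dual variable moves, so $y^{k+1}-y^k=(\hat y^{k+1}_{i_k}-y^k_{i_k})\,e(i_k)$ and therefore
\[
\bar y^{k+1}=y^{k+1}+P^{-1}(y^{k+1}-y^k)=y^k+(1+p_{i_k}^{-1})(\hat y^{k+1}_{i_k}-y^k_{i_k})\,e(i_k),
\]
which is precisely the block $\bar y(i_k)$ produced by $T$ on this input. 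Applying $\prox_{\tau,g}$ then yields $\hat x(i_k)=\prox_{\tau,g}(x^k-\tau A^\top\bar y^{k+1})=x^{k+1}$, i.e.\ $T(1\otimes x^k,1\otimes y^k)_{x(i_k)}=x^{k+1}$.

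\textbf{Norm identity.} Finally I would expand the $\bar S\bar P$-norm blockwise: primal block $i$ carries weight $p_i\tau^{-1}$ and dual block $i$ carries weight $p_iD(\sigma)^{-1}$. Since $\hat y(i)=\hat y^{k+1}$ for every $i$ and $\sum_i p_i=1$, the dual part of $\|T(1\otimes x^k,1\otimes y^k)-(1\otimes x^k,1\otimes y^k)\|^2_{\bar S\bar P}$ collapses to $\|\hat y^{k+1}-y^k\|^2_{D(\sigma)^{-1}}$, while the primal part is $\sum_i p_i\|\hat x(i)-x^k\|^2_{\tau^{-1}}$. On the algorithm side, conditioning on $\mathcal{F}_k$ freezes $x^k$ and $x^{k+1}=\hat x(i_k)$ with $\mathbb{P}(i_k=i)=p_i$, so $\mathbb{E}_k\big[\|x^{k+1}-x^k\|^2_{\tau^{-1}}\big]=\sum_i p_i\|\hat x(i)-x^k\|^2_{\tau^{-1}}$; and since $y^{k+1}-y^k$ is supported on the single coordinate $i_k$, $\|y^{k+1}-y^k\|^2_{D(\sigma)^{-1}P^{-1}}=\sigma_{i_k}^{-1}p_{i_k}^{-1}(\hat y^{k+1}_{i_k}-y^k_{i_k})^2$, whose conditional expectation is $\sum_i p_i\sigma_i^{-1}p_i^{-1}(\hat y^{k+1}_i-y^k_i)^2=\|\hat y^{k+1}-y^k\|^2_{D(\sigma)^{-1}}$; summing matches the left-hand side. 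I expect the only genuinely delicate point to be this last accounting: the lift to $n$ copies of $(x,y)$ is forced by the dependence of $\bar y^{k+1}$, and hence of $x^{k+1}$, on the random index $i_k$, and the block weights in $\bar P$ are engineered precisely so that the deterministic $\bar S\bar P$-norm of the lifted displacement reproduces $\mathbb{E}_k$ of the true displacement --- concretely, the factor $p_{i_k}^{-1}$ buried in the $D(\sigma)^{-1}P^{-1}$-weighted norm of the one-coordinate dual step is what cancels the $p_i$ from $\bar P$ against the $p_i$ weight coming from the conditional expectation.
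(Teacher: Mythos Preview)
Your proposal is correct and follows essentially the same route as the paper's proof: unwind the prox optimality conditions to identify fixed points with $\mathcal{Z}^\star$, feed the replicated point $(1\otimes x^k,1\otimes y^k)$ into $T$ to recover the algorithm's updates, and expand the $\bar S\bar P$-norm blockwise using $\sum_i p_i=1$ together with the identity $\|\hat y^{k+1}-y^k\|^2_{D(\sigma)^{-1}}=\mathbb{E}_k\big[\|y^{k+1}-y^k\|^2_{D(\sigma)^{-1}P^{-1}}\big]$. Your version is a bit more explicit (you spell out why $\bar y(i_k)=\bar y^{k+1}$ and rederive the dual-norm expectation identity rather than citing it), but the structure and all substantive steps match.
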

Before presenting the proof of the lemma, we use an example to illustrate the notation and the main idea behind it.
\begin{example}
\normalfont 
Let $d=1$, $n=2$, then $\bs x = \binom{\bs x(1)}{\bs x(2)} \in\mathbb{R}^2$, $\bs y = \binom{\bs y(1)}{\bs y(2)}\in\mathbb{R}^4$, and
\begin{align*}
\bar S &= \diag(\tau^{-1}, \tau^{-1}, \sigma_1^{-1}, \sigma_2^{-1}, \sigma^{-1}_1, \sigma_2^{-1}) \in \mathbb{R}^{6\times 6}, \\
\bar P &= \diag(p_1, p_2, p_1, p_1, p_2, p_2) \in \mathbb{R}^{6\times 6}.
\end{align*}
Then, we have by letting $\bs x = 1\otimes x^k$, $\bs y = 1\otimes y^k$,
\begin{alignat*}{2}
\bs {\hat y}(1) &= \prox_{\sigma, f^\ast}(y^k + \diag(\sigma) A x^k ),~~~~~~~~&&\bs {\hat y}(2) = \prox_{\sigma, f^\ast}(y^k + \diag(\sigma) A x^k ), \\
\bs{\bar y}(1) &= y^k + (1+p_1^{-1})\begin{bmatrix} \bs{\hat y}(1)_1 - y^k _1 \\ 0\end{bmatrix}, &&\bs{\bar y}(2) = y^k + (1+p_2^{-1})\begin{bmatrix} 0 \\ \bs{\hat y}(2)_2 - y^k _2 \end{bmatrix}, \\
\bs{\hat x}(1) &= \prox_{\tau, g}(x^k - \tau A^\top \bs{\bar y}(1)), &&\bs{\hat x}(2) = \prox_{\tau, g}(x^k - \tau A^\top \bs{\bar y}(2)).
\end{alignat*}
We have $T(1\otimes x^k, 1\otimes y^k) = \left(\begin{bmatrix} \bs{\hat x}(1) \\ \bs{\hat x}(2) \end{bmatrix}, \begin{bmatrix} 
\bs{\hat y}(1) \\ \bs{\hat y}(2) \end{bmatrix}\right)$. By using the definition of $\hat y^{k+1}$ in~\Cref{lem: one_iteration}, we see that $(x^{k+1}, \hat y^{k+1}) = (\bs{\hat x}(1), \bs{\hat y}(1))$ if $i_k = 1$ and $(x^{k+1}, \hat y^{k+1}) = (\bs{\hat x}(2), \bs{\hat y}(1))$ if $i_k = 2$. Note that we can take any copy of $\bs{\hat y}$ as $\bs{\hat y}(1) = \bs{\hat y}(2)$.
Moreover, depending on $i_k$, one obtains $y^{k+1}$ from $\hat y^{k+1}$ with a coordinate-wise update, as given in SPDHG (see~\Cref{alg: main_spdhg}).
\end{example}

\begin{proof}[Proof of~\Cref{lem: defT}]
Let $(\bs x, \bs y)$ be a fixed point of $T$. Then it follows that \\$\bs y(i) = \prox_{\sigma, f^*}(\bs y(i) + \diag(\sigma) A\bs x(i))$, $\forall i$, $\bs {\bar y}(i) =\bs y(i)$, $\forall i$ and $\bs x(i) = \prox_{\tau, g}(\bs x(i) -\tau A^\top \bs y(i))$, $\forall i$. Hence, optimality conditions for each $i$ are the same as~\eqref{eq: kkt}. Therefore fixed points of $T$ are such that $(\bs x(i), \bs y(i))\in\mathcal{Z}^\star, \forall i$.

The equality $(x^{k+1}, \hat{y}^{k+1}) = (T_x(1\otimes x^k, 1\otimes y^k)(i_k), T_y(1\otimes x^k, 1\otimes y^k)(1))$ is just another way to write the algorithm.
Since when inputted $(1\otimes x^k, 1\otimes y^k)$, $T$ outputs $(1\otimes \hat{y}^{k+1})$ for the dual variable, we can simply take first copy for $\hat{y}^{k+1}$. 

For the last result, we use $\| \hat{y}^{k+1} - y^k \|^2_{\Dsigma^{-1}} = \mathbb{E}_k \big[ \| y^{k+1}-y^k \|^2_{\Dsigma^{-1}P^{-1}} \big]$ to show
\begin{align*}
\|T&(1\otimes x^k, 1\otimes y^k) - (1\otimes x^k, 1\otimes y^k)\|^2_{\bar{S}\bar{P}} \notag\\
&= \sum_{i=1}^n \left(\| T_x(1\otimes x^k,1 \otimes y^k)(i) - x^k \|_{\tau^{-1}}^2 p_i + \| T_y(1\otimes x^k,1 \otimes y^k)(i) - y^k \|_{\Dsigma^{-1}}^2p_i\right) \\
&= \sum_{i=1}^n \left(\| T_x(1\otimes x^k,1 \otimes y^k)(i) - x^k \|_{\tau^{-1}}^2 p_i \right) + \| \hat{y}^{k+1} - y^k \|_{\Dsigma^{-1}}^2\big(\sum_{i=1}^n p_i\big) \\
&= \mathbb E_k\left[\|x^{k+1} - x^k\|_{\tau^{-1}}^2 + \|y^{k+1} - y^k\|_{\Dsigma^{-1}P^{-1}}^2\right],
\end{align*}
where we also used that $\sum_{i=1}^n p_i = 1$.
\end{proof}

We proceed with the main theorem of this section. We present the main ideas and ingredients that make the proof possible in the following proof sketch.
The details of the proof using classical arguments from~\cite{combettes2015stochastic,bertsekas2011incremental,iutzeler2013asynchronous} are deferred to~Section~\ref{sec: full_proof_as_thm}.
Let us define 
\begin{equation}\label{eq: gi4}
{\Delta}^k = V_{k+1}(x^k - x^\star, y^{k+1}-y^\star).
\end{equation}
\begin{theorem}\label{thm: conv_as}
Let~\Cref{as: asmp1} hold. Then, it holds that $\mathbb{E}[V_k(x^{k-1}-x^\star, y^k - y^\star)] \leq  \Delta^0$, $\sum_{k=1}^{\infty} \mathbb{E}[V(z^k - z^{k-1})] \leq \Delta^0$.
Moreover, almost surely, there exists $(x^\star, y^\star) \in \mathcal{Z}^\star$, such that the iterates of SPDHG satisfy $(x^k,y^k) \to (x^\star, y^\star)$.
\end{theorem}
\begin{proof}[Proof sketch]
On~\eqref{eq: one_it_lem}, we pick $(x,y)=({x}^\star,y^\star)$ and by convexity, $D_g(x^k; z^\star) \geq 0$, $D_{f^\ast}(\hat{y}^{k+1}; z^\star) \geq 0$. Next, by using the definition of $\Delta^k$, we write~\eqref{eq: one_it_lem} as
\begin{equation*}
\mathbb{E}_k \left[ {\Delta}^k \right] \leq \Delta^{k-1} - V(z^k - z^{k-1}).
\end{equation*}
We apply Robbins-Siegmund lemma~\cite[Theorem 1]{robbins1971convergence} to get that almost surely, $\Delta^k$ converges to a finite valued random variable and $V(z^k-z^{k-1})\to 0$. 
Consequently, by~\eqref{eq: v_lw_bd}, $\| y^k - y^{k-1}\|$ converges to $0$ almost surely. 
Since almost surely, $\Delta^k$ converges and $\| y^k - y^{k-1}\|$ converges to $0$, we have that $\| z^k - z^\star \|$ converges almost surely.

Next, we denote $\bs q^k = (1\otimes x^k, 1\otimes y^k)$ and use the arguments in~\cite[Proposition 2.3]{combettes2015stochastic},~\cite[Theorem 1]{fercoq2019coordinate} to argue that there exists a probability $1$ set $\Omega$ such that for every $z^\star \in \mathcal{Z}^\star$ and for every $\omega \in \Omega$, $\|z^k(\omega) - z^\star\|$ converges and $\| T(\bs q^k(\omega)) - \bs q^k(\omega) \| \to 0$.
As for every $\omega\in\Omega$, $(z^k(\omega))_k$ is bounded, we denote by $\tilde{z}=(\tilde{x}, \tilde{y})$ one of its cluster points.
Then, we denote $\tilde{\bs q} = (1\otimes \tilde{x}, 1\otimes \tilde{y})$ and have that $\tilde{\bs q}$ is a cluster point of $(\bs q^k(\omega))_k$.

The key step in our proof that enables the result is the fixed point characterization of $T$ in~\Cref{lem: defT}.
With this result, we derive $\tilde{z}\in\mathcal{Z}^\star$ as $\tilde{\bs q}$ is a fixed point of $T$.

To sum up, we have shown that at least on some subsequence $z^k(\omega)$ converges to $\tilde{z}\in\mathcal{Z}^\star$.
As for every $\omega\in\Omega$ and $z^\star\in\mathcal{Z}^\star$, $\|z^k(\omega)-z^\star\|$ converges, the result follows.
\end{proof}

\subsection{Linear convergence}\label{sec: lin_conv}
The standard approach for showing linear convergence with metric subregularity is to obtain a \textit{Fejer-type inequality} of the form~\cite{latafat2019new}
\begin{equation}
\mathbb{E}_k \left[ d(z^{k+1} - z^\star) \right] \leq  d(z^{k} - z^\star)- V(T(z^{k})-z^{k}),
\end{equation}
for suitably defined distance functions $d$, $V$ and operator $T$.
However, as evident from~\eqref{eq: one_it_lem} and the definition of $V_{k+1}$, one iteration result of SPDHG does not fit into this form.
When $x=x^\star, y = y^\star$, $V_{k+1}(x^k - x^\star, y^{k+1}-y^\star)$ does not only measure distance to solution, but also the distance of subsequent iterates $y^{k+1}$ and $y^k$.
In addition, $V_{k+1}$ includes $x^k-x^\star$ and $y^{k+1}-y^\star$ rather than $x^{k+1}-x^\star$ and $y^{k+1}-y^\star$, which further presents a challenge due to asymmetry, for using metric subregularity.
Therefore, an intricate analysis is needed to control the additional terms and handle the asymmetry in $V_{k+1}$.
In addition,~\Cref{lem: defT} is a necessary tool to identify $T$.

We need the following notation and lemma which builds on Lemma~\ref{lem: defT}, for easier computations with metric subregularity.
For the operators, we adopt the convention in~\cite{latafat2019new}.
Operator $C$ is the concatenation of subdifferentials, $M$ is the skew symmetric matrix that is formed using matrix $A$.
Operator $F$ is the KKT operator and $\bs H$ is the ``metric'' that helps us write the algorithm in proximal point form (see~\Cref{lem: defT}).
Due to duplication in~\Cref{lem: defT}, we need duplicated versions of $C$ and $M$. Consistent with the notation of~\Cref{lem: defT} (also see~Section~\ref{sec: notation}), we use boldface to denote operators in the duplicated space.
\begin{lemma}\label{lem: ops}
Under the notations of Lemma~\ref{lem: defT}, to write compactly the operation of $T$, let us define the operators
\begin{align*}
&C\colon (x, y) \mapsto (\partial g(x), \partial f^\ast(y)), \\
&M\colon (x, y) \mapsto (A^\top y, -Ax), \\
&\bs C\colon (\bs x, \bs y) \mapsto (\partial g(\bs x(1)), \dots, \partial g(\bs x(n)), \partial f^\ast(\bs y(1)), \dots, \partial f^\ast(\bs y(n))), \\
&\bs M\colon (\bs x, \bs y) \mapsto ( A^\top \bs y(1), \dots, A^\top \bs y(n), -A\bs x(1), \dots, -A\bs x(n)), \\
& F = C+M,
\end{align*}
and
\begin{align*}
\bs H\colon (\bs x, \bs y) \mapsto \big(&\tau^{-1} \bs x(1) + A^\top (1+p_1^{-1}) E(1)  \bs y(1), \ldots, \\ &\tau^{-1}\bs x(n) + A^\top(1+p_n^{-1}) E(n)  \bs y(n), \Dsigma^{-1} \bs y(1), \ldots, \Dsigma^{-1} \bs y(n) \big)\;.
\end{align*}
Let $\bs q^k\!=\!(1\otimes x^k, 1\otimes y^k)$, $\hat{\bs q}^{k+1} = T(\bs q^k)$ and
$\hat{z}^{k+1}=(x^{k+1}, \hat{y}^{k+1}) = (\hat{\bs q}^{k+1}_x(i_k), \hat{\bs q}^{k+1}_y(1))$.
Then, we have $(\bs H-\bs M)\bs q^k \in (\bs C+ \bs H)\hat{\bs q}^{k+1}$, $(\bs M-\bs H)(\hat{\bs q}^{k+1}-\bs q^k) \in (\bs C+\bs M)\hat{\bs q}^{k+1}$, 
\begin{equation*}
\mathbb{E}_k \left[ \dist^2(0,F\hat{z}^{k+1})\right] = \mathbb{E}_k \left[ \dist^2(0,(C+M)\hat{z}^{k+1}) \right] = \dist^2_{\bar{P}}(0, (\bs C+\bs M)\hat{\bs q}^{k+1}).
\end{equation*}
\end{lemma}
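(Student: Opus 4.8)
The plan is to translate the two proximal steps that define $T$ into subdifferential inclusions and then rearrange. Since $\bs q^k = (1\otimes x^k, 1\otimes y^k)$, every one of the $n$ blocks of the input carries the same pair $(x^k, y^k)$; consequently the dual step produces the same vector $\hat y(i) = \hat y^{k+1} = \prox_{D(\sigma), f^\ast}(y^k + D(\sigma)Ax^k)$ in every block, with optimality condition
\begin{equation*}
D(\sigma)^{-1}(y^k - \hat y^{k+1}) + Ax^k \in \partial f^\ast(\hat y^{k+1}),
\end{equation*}
while, writing $(\hat y(i)_i - y(i)_i)e(i) = E(i)(\hat y^{k+1}-y^k)$ so that $\bar y(i) = y^k + (1+p_i^{-1})E(i)(\hat y^{k+1}-y^k)$, the primal step produces $\hat x(i) = \prox_{\tau, g}(x^k - \tau A^\top\bar y(i))$ with
\begin{equation*}
\tau^{-1}(x^k - \hat x(i)) - A^\top\bar y(i) \in \partial g(\hat x(i)).
\end{equation*}
Everything else is bookkeeping against these two relations.

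For the first inclusion, I would read off the $i$-th dual block of $(H-N)\bs q^k$, namely $D(\sigma)^{-1}y^k + Ax^k$, and the $i$-th dual block of $(B+H)\hat{\bs q}^{k+1}$, namely $\partial f^\ast(\hat y^{k+1}) + D(\sigma)^{-1}\hat y^{k+1}$; membership is exactly the dual optimality condition. For the $i$-th primal block, $(H-N)\bs q^k$ equals $\tau^{-1}x^k + A^\top(1+p_i^{-1})E(i)y^k - A^\top y^k$ and $(B+H)\hat{\bs q}^{k+1}$ equals $\partial g(\hat x(i)) + \tau^{-1}\hat x(i) + A^\top(1+p_i^{-1})E(i)\hat y^{k+1}$; subtracting, the combination $A^\top[(1+p_i^{-1})E(i)(y^k-\hat y^{k+1}) - y^k]$ collapses to $-A^\top\bar y(i)$, and membership becomes the primal optimality condition. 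The second inclusion $(N-H)(\hat{\bs q}^{k+1}-\bs q^k)\in(B+N)\hat{\bs q}^{k+1}$ is obtained identically: in each block, the $H$-contribution coming from $\hat{\bs q}^{k+1}$ cancels part of $(N-H)(\hat{\bs q}^{k+1}-\bs q^k)$, the stray $\pm Ax^k$, $\pm A\hat x(i)$, $\pm A^\top y^k$ and $\pm A^\top\hat y^{k+1}$ terms match up, and one is left with the same two optimality conditions.

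For the displayed equalities, the first is immediate from $F = C+M$. For the second, recall $\hat z^{k+1} = (\hat x(i_k), \hat y^{k+1})$, where $x^k, y^k$—hence $\hat y^{k+1}$ and each $\hat x(i)$—are $\mathcal{F}_k$-measurable and only $i_k$ is random with $\mathbb{P}(i_k=i)=p_i$. Splitting $(C+M)\hat z^{k+1} = \bigl(\partial g(\hat x(i_k)) + A^\top\hat y^{k+1},\ \partial f^\ast(\hat y^{k+1}) - A\hat x(i_k)\bigr)$ and taking conditional expectation,
\begin{equation*}
\mathbb{E}_k\bigl[\dist^2(0,(C+M)\hat z^{k+1})\bigr] = \sum_{i=1}^n p_i\Bigl(\dist^2(0,\partial g(\hat x(i)) + A^\top\hat y^{k+1}) + \dist^2(0,\partial f^\ast(\hat y^{k+1}) - A\hat x(i))\Bigr).
\end{equation*}
Since $(B+N)\hat{\bs q}^{k+1}$ has $i$-th primal block $\partial g(\hat x(i)) + A^\top\hat y^{k+1}$ and $i$-th dual block $\partial f^\ast(\hat y^{k+1}) - A\hat x(i)$, the right-hand side is precisely $\dist^2_{\bar P}(0,(B+N)\hat{\bs q}^{k+1})$ by the block structure of $\bar P$.

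The computations themselves are routine; the only thing requiring genuine care is the bookkeeping—tracking which output blocks of $T(\bs q^k)$ depend on the sampled index (the primal blocks do, through $\bar y(i)$; the dual blocks do not), spotting $-A^\top\bar y(i)$ hidden inside the primal block of $(H-N)\bs q^k$, and lining up the block weights of $\bar P$ with the sampling probabilities when passing to $\mathbb{E}_k$.
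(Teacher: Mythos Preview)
Your proposal is correct and follows essentially the same route as the paper: write the optimality conditions for the two proximal steps, identify them block-by-block with $(H-N)\bs q^k \in (B+H)\hat{\bs q}^{k+1}$, and compute the conditional expectation as a $p_i$-weighted sum over the $n$ possible outcomes of $i_k$ to recognise $\dist^2_{\bar P}(0,(B+N)\hat{\bs q}^{k+1})$. The only difference is that for the second inclusion the paper does not repeat the block-by-block verification: it simply observes that once $(H-N)\bs q^k \in (B+H)\hat{\bs q}^{k+1}$ is established, subtracting $H\hat{\bs q}^{k+1}$ and adding $N\hat{\bs q}^{k+1}$ on both sides immediately yields $(N-H)(\hat{\bs q}^{k+1}-\bs q^k)\in (B+N)\hat{\bs q}^{k+1}$, which is cleaner than tracking the stray $\pm A x^k$, $\pm A^\top y^k$ terms again.
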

\begin{proof}
We start by the representation in~\Cref{lem: defT} by incorporating the update of $\bar{y}^{k+1}$, and recalling the definition of $E(i) = e(i)e(i)^\top$, $\forall i \in \{ 1, \ldots, n \}$
\begin{align*}
 \hat y(i) &= {\prox}_{\sigma, f^\ast}(y(i) + \diag(\sigma) A x(i)) \\
 \hat x(i) &= {\prox}_{\tau, g}(x(i) - \tau A^\top \left[ y(i) + (1+p_i^{-1})E(i)(\hat{y}(i) - y(i)) \right]) \\
 & = {\prox}_{\tau, g}(x(i) - \tau A^\top(1+p_i^{-1})E(i)\hat{y}(i) + \tau A^\top (-I_{n\times n} + (1 + p_i^{-1})E(i))y(i)).
\end{align*}
We now use the definition of proximal operator to obtain
\begin{align*}
& \sigma^{-1}y(i) + A x(i) \in \partial f^\ast(\hat{y}(i)) + \sigma^{-1} \hat{y}(i) \\
& \tau^{-1} x(i) - A^\top \! y(i) + A^\top \!(1\!+\!p_i^{-1})E(i)y(i) \in \partial g(\hat{x}(i)) + \tau^{-1}\hat{x}(i) + A^\top\!(1\!+\!p_i^{-1})E(i)\hat{y}(i).
\end{align*}
We identify
\begin{align*}
\bs H\bs q\!=\!\begin{bmatrix} \tau^{-1}\bs x(1) + A^\top(1+p_1^{-1})E(1)\bs y(1) \\ \vdots \\ \tau^{-1}\bs x(n) + A^\top(1+p_n^{-1})E(n)\bs y(n) \\ \Dsigma^{-1}\bs y(1) \\ \vdots \\ \Dsigma^{-1} \bs y(n) \end{bmatrix},
\bs M \bs q = \begin{bmatrix} A^\top \bs y(1) \\ \vdots \\ A^\top \bs y(n) \\ - A\bs x(1) \\ \vdots \\ - A\bs x(n) \end{bmatrix}, \bs C {\bs q} = \begin{bmatrix} \partial g(\bs x(1)) \\ \vdots \\ \partial g(\bs x(n)) \\ \partial f^\ast(\bs y(1)) \\ \vdots \\ \partial f^\ast(\bs y(n)) \end{bmatrix}.
\end{align*}
We set ${\bs q} = {\bs q}^k$ and $\hat{\bs q} = \hat{\bs q}^{k+1}$ and use the definition of $T$ in~\Cref{lem: defT} to obtain the first inclusion.

The second inclusion follows by adding to both sides $\bs M\hat{\bs q}^{k+1}$ and rearranging.

For the equality, we write
\begin{align*}
\mathbb{E}_k \left[ \dist^2(0, (C+M)\hat{z}^{k+1}) \right] &= \sum_{i=1}^n \dist^2(0, (C+M) {\hat{\bs{q}}^{k+1}(i)})p_i \notag \\
&= \dist^2_{\bar{P}}(0, (\bs C+\bs M)\hat{\bs{q}}^{k+1}),
\end{align*}
where the first equality follows by $\hat{z}^{k+1}=(x^{k+1}, \hat{y}^{k+1}) = (\hat{\bs q}^{k+1}_x(i_k), \hat{\bs q}^{k+1}_y(1))$ and the second equality is by the definitions of $C$, $M$, $\bs C$, and $\bs M$ and $\hat{\bs q}^{k+1}_y(i) = \hat{\bs q}^{k+1}_y(1)$, $\forall i$.
\end{proof}

We continue with our assumption for linear convergence (see~Section~\ref{sec: ms}).
\begin{assumption}\label{as: ms}
Metric subregularity holds for $F$ (see~\eqref{eq: kkt},~Section~\ref{sec: ms}) at all $z^\star\in\mathcal{Z^\star}$ for $0$ with constant $\eta > 0$ using $\|\cdot \|_S$ with
$S = \diag(\tau^{-1}1_p, \sigma_1^{-1}, \dots, \sigma_n ^{-1})$, and the neighborhood of regularity $\mathcal{N}(z^\star)$ contains $\hat{z}^k, \forall k$.
\end{assumption}

We present our main theoretical development in the next theorem, which states that SPDHG with step sizes in~\eqref{eq: ss_rules} attains linear convergence with~\Cref{as: ms}.
The proof idea is to utilize the term $-V(z^k-z^{k-1})$ in~\eqref{eq: one_it_lem} to obtain contraction. For this, we have to use the results of~\Cref{lem: defT,lem: ops} to write this term with the fixed point characterization given in~\Cref{lem: defT}, which allows using metric subregularity.

We denote 
\begin{equation*}
(x_\star^{k-1}, y_\star^{k}) = \arg\min_{(x, y)\in \mathcal{Z}^\star} V_k(x^{k-1} - x, y^k - y),
\end{equation*}
which exists since $V_k$ is a nonnegative quadratic function.
We define (cf.~\eqref{eq: gi4})
\begin{align*}
\Delta^k &= V_{k+1}(x^k - x^{k}_\star, y^{k+1} - y^{k+1}_\star), \\
 \Phi^k &= \Delta^k - \frac{C_1}{4\zeta} \| y^k - y_\star^k \|^2_{\Dsigma^{-1}} \geq 0.
\end{align*}
\begin{theorem}\label{eq: thm_lin}
Let Assumptions~\ref{as: asmp1} and~\ref{as: ms} hold.
Then it holds that 
\begin{equation}\label{eq: fejer_type}
\mathbb{E}_k \left[ {\Delta}^k \right] \leq \Delta^{k-1} - V(z^k - z^{k-1}),
\end{equation}
and
\begin{align*}
\mathbb{E} \bigg[ \frac{C_1}{2} \| x^k - x^k_\star \|^2_{\tau^{-1}} + \frac{1}{2} \| y^{k+1} - y^{k+1}_\star \|&^2_{\Dsigma^{-1}P^{-1}}\bigg] \leq (1-\rho)^k 2\Phi^0,
\end{align*}
where, $\rho = \frac{C_1\underline{p}}{2\zeta}$, $\zeta = 2+2\eta^2\| \bs H-\bs M \|^2$, $C_1 = 1-\gamma$.
\end{theorem}
\begin{proof}
Starting from the result of~\Cref{lem: one_iteration}, we have
\begin{align}\label{eq: res_lem1}
D_g(x^k; z) + D_{f^\ast}(\hat{y}^{k+1}; z) &\leq -\mathbb{E}_k \big[ V_{k+1}(x^k-x, y^{k+1}-y) \big] \notag \\
&+V_k(x^{k-1}-x, y^k - y) - V(z^k - z^{k-1}).
\end{align}
We pick $x={x}^{k-1}_\star$, $y=y^k_\star$, with $z^k_\star = (x^{k-1}_\star, y^{k}_\star)$ and use convexity to get $D_g(x^k; {z}^k_\star) \geq 0$ and $D_{f^\ast}(\hat{y}^{k+1}; z^{k}_\star) \geq 0$.
In addition, we define
\begin{align*}
\Delta^{k-1} = V_k(x^{k-1}-x^{k-1}_\star, y^k - y^k_\star) \\
\tilde{\Delta}^k = V_{k+1}(x^k - x^{k-1}_\star, y^{k+1}-y^k_\star).
\end{align*}
We use these definitions in~\eqref{eq: res_lem1} to write
\begin{equation*}
\mathbb{E}_k \left[ \tilde{\Delta}^k \right] \leq \Delta^{k-1} - V(z^k - z^{k-1}).
\end{equation*}
By definition of $(x^{k}_\star, y^{k+1}_\star)$, we have $\Delta^k \leq \tilde{\Delta}^k$, which implies that
\begin{equation*}
\mathbb{E}_k \left[ {\Delta}^k \right] \leq \Delta^{k-1} - V(z^k - z^{k-1}).
\end{equation*}
Recursion of this inequality gives boundedness of the iterates $x_k$ and $y_k$, in expectation. 
However, it is not possible to derive sure boundedness of the sequence. 
Without sure boundedness, the set that includes $x_k, y_k$ depends on the specific trajectory of the algorithm, and it is not possible to find a set independent of these.
As metric subregularity holds for PLQs with a bounded neighborhood (see~Section~\ref{sec: ms}), we cannot utilize this result and this is the main reason for the need for bounded domains in this case. This assumption would ensure sure boundedness of the sequence, which gives us a suitable set to use for using metric subregularity assumption for PLQs.

We recall $S = \diag(\tau^{-1}1_p, \sigma_1^{-1}, \dots, \sigma_n ^{-1})$, $\bar{S}$ and $\bar{P}$ are as defined in~\Cref{lem: defT}, and $\dist^2_{S}(z^k, \mathcal{Z^\star}) = \| z^k - \mathcal{P}^S_{\mathcal{Z}^\star}(z^k) \|^2_{S}= \| x^k-\tilde{x}^k_\star\|^2_{\tau^{-1}} + \| y^k - y^k_\star \|^2_{\Dsigma^{-1}}$ where $\tilde{x}^k_\star$ is the projection of $x^k$ onto the set of solutions with respect to norm $\| \cdot \|_{\tau^{-1}}$.
We now use~\Cref{as: ms} stating that $F=C+M$ is metrically subregular at $\mathcal{P}_{\mathcal{Z}^\star}^S(\hat{z}^{k+1})$ for $0$. 
We recall, $\bs q^k = (1\otimes x^k, 1\otimes y^k)$ and $\hat{\bs q}^{k+1} = T(\bs q^k)$ and estimate as
\begin{align}
 \| x^k - \tilde{x}^k_\star \|^2_{\tau^{-1}} + \| y^k - y^k_\star &\|^2_{\sigma^{-1}} = \dist^2_S(z^k, \mathcal{Z}^\star) \leq  \mathbb{E}_k \left[ \| z^k - \mathcal{P}^S_{\mathcal{Z}^\star}(\hat{z}^{k+1}) \|^2_S \right] \notag \\
 &\leq 2 \mathbb{E}_k \left[ \| z^k - \hat{z}^{k+1} \|_S^2 \right] + 2\mathbb{E}_k \left[ \| \hat{z}^{k+1} - \mathcal{P}^S_{\mathcal{Z}^\star}(\hat{z}^{k+1}) \|_S^2 \right],
\label{eq: metr_sub0}
\end{align}
where the first inequality is due to the definition of $\dist_{S}^2(z^k, \mathcal{Z}^\star)$.
Next, we estimate the second term on RHS
\begin{multline}
2\mathbb{E}_k \left[ \| \hat{z}^{k+1} - \mathcal{P}^S_{\mathcal{Z}^\star}(\hat{z}^{k+1}) \|_S^2 \right] \leq 2\eta^2 \mathbb{E}_k\left[ \dist^2_S(0, (C+M)\hat{z}^{k+1})\right] \\
= 2\eta^2 \dist^2_{\bar{S}\bar{P}}(0, (\bs C+ \bs M)\hat{\bs q}^{k+1}) \leq 2\eta^2 \|\bs M-\bs H\|^2 \|\hat{\bs q}^{k+1} - \bs q^k \|_{\bar{S}\bar{P}}^2, 
\end{multline}
with the first inequality being due to metric subregularity~of $C+M$~(see Remark~\ref{rem: metric_sub}) since $\dist_S^2(\hat z^{k+1}, \mathcal{Z}^\star) = \| \hat z^{k+1} - \mathcal{P}_{Z^\star}^S(\hat z^{k+1})\|^2_S$. First equality and second inequality are by~\Cref{lem: ops} and Cauchy-Schwarz inequality.
Joining the estimates give
\begin{equation}
 \| x^k - \tilde{x}^k_\star \|^2_{\tau^{-1}} + \| y^k - y^k_\star \|^2_{\Dsigma^{-1}} \leq 2 \mathbb{E}_k \left[ \| z^k - \hat{z}^{k+1}\|_S^2 \right] 
 + 2\eta^2 \|\bs M-\bs H\|^2 \|\hat{\bs q}^{k+1} - \bs q^k \|_{\bar{S}\bar{P}}^2.
\end{equation}
First, we use $\| \hat{y}^{k+1} - y^k \|^2_{\Dsigma^{-1}} = \mathbb{E}_k \left[ \| y^{k+1}-y^k \|^2_{\Dsigma^{-1}P^{-1}} \right]$ to estimate
\begin{align}
\mathbb{E}_k \left[ \| z^k - \hat{z}^{k+1} \|^2_{S}\right] &=   \mathbb{E}_k \left[ \| x^{k+1} - x^{k} \|^2_{\tau^{-1}} \right] +  \| \hat{y}^{k+1} - y^k \|^2_{\Dsigma^{-1}} \notag \\
&= \mathbb{E}_k \left[ \| x^{k+1} - x^{k} \|^2_{\tau^{-1}} +  \| {y}^{k+1} - y^k \|^2_{\Dsigma^{-1}P^{-1}} \right].\label{eq: metr_sub1}
\end{align}
Second, we use Lemma~\ref{lem: defT} to obtain
\begin{align}
\| \hat{\bs q}^{k+1} - \bs q^k \|^2_{\bar{S}\bar{P}} &=  \| T(1\otimes x^k, 1\otimes y^k) - (1\otimes x^k, 1\otimes y^k) \|^2_{\bar{S}\bar{P}} \notag \\
&= \mathbb{E}_k \left[  \| x^{k+1} - x^k \|^2_{\tau^{-1}} + \| y^{k+1} - y^k \|^2_{\Dsigma^{-1}P^{-1}} \right].\label{eq: metr_sub2}
\end{align}
We combine~\eqref{eq: metr_sub1} and~\eqref{eq: metr_sub2} in~\eqref{eq: metr_sub0} to get
\begin{align}
 \frac{1}{2} \| x^k - &\tilde{x}^k_\star \|^2_{\tau^{-1}} + \frac{1}{2} \| y^k - y^k_\star \|^2_{\Dsigma^{-1}} \notag \\
&\leq (2+ 2\eta^2 \| N-H\|^2) \mathbb{E}_k \left[ \frac{1}{2} \| x^{k+1} - x^k \|^2_{\tau^{-1}} + \frac{1}{2} \| y^{k+1} - y^k \|^2_{\Dsigma^{-1}P^{-1}} \right].\label{eq: metr_sub_last}
\end{align}
Herein, we denote $\zeta = 2 + 2\eta^2 \| \bs H-\bs M\|^2$.

By using~\eqref{eq: v_lw_bd}, we have that, for all $\alpha\in[0,1]$
\begin{align}
\mathbb{E}_{k-1} \left[ V(z^k - z^{k-1}) \right] &\geq C_1 \mathbb{E}_{k-1} \left[ \frac{1}{2} \|x^k -x^{k-1}\|^2_{\tau^{-1}} + \frac{1}{2} \| y^k - y^{k-1}\|^2_{\Dsigma^{-1}P^{-1}} \right] \notag \\
&\geq \frac{C_1}{\zeta} \left( \frac{\alpha}{2} \| x^{k-1} - \tilde{x}^{k-1}_\star \|^2_{\tau^{-1}} + \frac{1}{2} \| y^{k-1} - y^{k-1}_\star \|^2_{\Dsigma^{-1}} \right), \label{eq: metr_sub}
\end{align}
where the second inequality is due to~\eqref{eq: metr_sub_last} and $\alpha \geq 1$.

We have, by definition of $x_{k-1}^{\star}$ that
\begin{align*}
\Delta^{k-1} &\leq V_k(x^{k-1}-\tilde x^{k-1}_\star, y^k - y^k_\star) \notag\\
&=  \frac{1}{2} \| x^{k-1} - \tilde x^{k-1}_\star\|^2_{\tau^{-1}} + \frac{1}{2} \| y^{k}-y^k_\star\|^2_{\Dsigma^{-1}P^{-1}} + \frac{1}{2} \| y^{k}-y^{k-1}\|^2_{\Dsigma^{-1}P^{-1}} \notag \\
&\qquad -  \langle P^{-1} A(x^{k-1}-\tilde x^{k-1}_\star), y^{k}-y^{k-1} \rangle.
\end{align*} 
Next, by Cauchy-Schwarz and Young's inequalities with~\eqref{eq: ss_rules}, we have
\begin{equation*}
-  \langle P^{-1} A(x^{k-1}-\tilde x^{k-1}_\star), y^{k}-y^{k-1} \rangle \leq \frac{\gamma}{2} \|y^k-y^{k-1}\|^2_{\Dsigma^{-1}P^{-1}} + \frac{\gamma}{2} \| x^{k-1}-\tilde x^{k-1}_\star\|^2_{\tau^{-1}}.
\end{equation*}
Using the final estimate and adding and subtracting $\frac{1+\gamma}{2\alpha}\|y^{k-1}- y^{k-1}_\star\|^2_{\Dsigma^{-1}}$ gives
\begin{multline}
\Delta^{k-1} \leq \frac{1+\gamma}{2} \| x^{k-1}-\tilde x^{k-1}_\star\|^2_{\tau^{-1}} + \frac{1+\gamma}{2\alpha}\|y^{k-1}- y^{k-1}_\star\|^2_{\Dsigma^{-1}} \\
+ \frac{1+\gamma}{2} \|y^k-y^{k-1}\|^2_{\Dsigma^{-1}P^{-1}} - \frac{1+\gamma}{2\alpha}\|y^{k-1}- y^{k-1}_\star\|^2_{\Dsigma^{-1}}.
\end{multline}
We now take conditional expectation of both sides and use~\eqref{eq: metr_sub} to get
\begin{multline*}
\mathbb E_{k-1}\left[\Delta^{k-1}\right]  \leq \frac{(1+\gamma)\zeta}{C_1 \alpha} \mathbb E_{k-1} \left[ V(z^k - z^{k-1})\right] + \frac{1+\gamma}{2} \mathbb E_{k-1} \left[\| y^{k} - y^{k-1}\|^2_{\Dsigma^{-1}P^{-1}}\right] \notag \\
+ \frac{1}{2} \mathbb E_{k-1} \left[\| y^k - y^k_\star\|^2_{\Dsigma^{-1}P^{-1}}\right]- {\frac{1+\gamma }{2\alpha}} \| y^{k-1} - y^{k-1}_\star\|_{\Dsigma^{-1}}^2.
\end{multline*}
By using~\eqref{eq: v_lw_bd} and requiring that $\frac{(1+\gamma)}{C_1} \leq \frac{(1+\gamma)\zeta}{C_1\alpha}$, or equivalently $\zeta \geq \alpha$, which is not restrictive since $\alpha$ is finite, and one can increase $\eta$ as in~\eqref{eq: def_ms} to satisfy the requirement, we can combine the first two terms in the right hand side to get
\begin{align*}
\mathbb E_{k-1} \left[\Delta^{k-1}\right] \leq \frac{2(1+\gamma)\zeta}{C_1 \alpha} \mathbb E_{k-1} \left[V(z^k - z^{k-1})\right] &+ \frac{1}{2} \mathbb E_{k-1} \left[\| y^k - y^k_\star\|^2_{\Dsigma^{-1}P^{-1}} \right] \notag \\
&- {\frac{1+\gamma }{2\alpha}} \| y^{k-1} - y^{k-1}_\star\|_{\Dsigma^{-1}}^2.
\end{align*}
We now insert this inequality into~\eqref{eq: fejer_type} and use that $\mathbb{E}_{k-1} \left[ \mathbb{E}_k \left[ \Delta^k \right] \right] =\mathbb{E}_{k-1} \left[ \Delta^k \right]$
\begin{align*}
\mathbb{E}_{k-1} \left[ \Delta^k \right] &\leq \mathbb{E}_{k-1} \left[ \Delta^{k-1} \right]  - \frac{C_1\alpha}{2(1+\gamma)\zeta} \mathbb{E}_{k-1} \left[ \Delta^{k-1} \right] \notag \\
&+ \frac{C_1\alpha}{4(1+\gamma)\zeta} \mathbb{E}_{k-1}\left[\| y^k - y^k_\star \|^2_{\Dsigma^{-1}P^{-1}}\right]- \frac{C_1}{4\zeta} \| y^{k-1}-y^{k-1}_\star \|^2_{\Dsigma^{-1}} .
\end{align*}
We take full expectation and rearrange to get
\begin{multline}\label{eq: lin_rec1}
\mathbb{E}\Big[\Delta^k - \frac{C_1\alpha}{4(1+\gamma)\zeta}\|y^k -y^k_\star \|^2_{\Dsigma^{-1}P^{-1}} \Big]  \\
\leq \left(1-\frac{C_1\alpha}{2(1+\gamma)\zeta}\right) \mathbb{E}\Big[\Delta^{k-1} - \frac{C_1}{4\zeta(1-\frac{C_1\alpha}{2(1+\gamma)\zeta})} \| y^{k-1} - y^{k-1}_\star \|^2_{\Dsigma^{-1}}\Big].
\end{multline}
We require
\begin{equation}\label{eq: alpha_req1}
C_2 = \frac{C_1\alpha }{4\underline{p}(1+\gamma)\zeta} \leq \frac{C_1}{4\zeta} \leq \frac{C_1}{4\zeta(1-\frac{C_1\alpha}{2(1+\gamma)\zeta})} \iff \alpha \leq (1+\gamma)\underline{p}.
\end{equation}
Let us pick $\alpha = (1+\gamma)\underline{p}$ so that $C_2 = \frac{C_1}{4\zeta}$ and define
\begin{equation*}
\Phi^k = \Delta^k - C_2 \|y^k - y^k_\star\|^2_{\Dsigma^{-1}}.
\end{equation*}
We note~\eqref{eq: metr_sub} and~\eqref{eq: fejer_type} to have
\begin{equation*}
\| y^k - y^k_\star \|^2_{\Dsigma^{-1}} \leq \frac{2\zeta}{C_1} \mathbb{E}_k \left[ V(z^{k+1} - z^k)\right] \leq \frac{2\zeta}{C_1} \mathbb{E}_k\left[\Delta^k\right].
\end{equation*}
Then, we can lower bound $\Phi^k$ as
\begin{equation}\label{eq: phi_lb}
\mathbb{E}\left[\Phi^k\right] \geq \left( 1-C_2\frac{2\zeta}{C_1}\right)\mathbb{E}\left[ \Delta^k\right] = \frac{1}{2} \mathbb{E}\left[\Delta^k\right].
\end{equation}
Therefore, it follows that $\mathbb{E} \left[ \Phi^k \right]$ is nonnegative, by the definition of $\Delta^k$ and~\eqref{eq: vk_lw_bd}.

We can now rewrite~\eqref{eq: lin_rec1} as
\begin{equation*}
\mathbb{E}\left[\Phi^k\right] \leq (1-\rho)\mathbb{E}\left[\Phi^{k-1}\right],
\end{equation*}
where $\rho = \frac{C_1 \underline p}{2\zeta}$.
We have shown that $\Phi^k$ converges linearly to $0$ in expectation.

By~\eqref{eq: phi_lb}, it immediately follows that $\Delta^k$ converges linearly to $0$.

To conclude, we note $\Delta^k = V_{k+1}(x^k - x^k_\star, y^{k+1}-y^{k+1}_\star)$, and~\eqref{eq: vk_lw_bd}, from which we conclude linear convergence of $\| x^k - x^k_\star \|^2_{\tau^{-1}}$ and $\| y^{k+1} - y^{k+1}_\star \|^2_{\Dsigma^{-1}P^{-1}}$.

It is obvious to see that $0 < \rho $ follows by the fact that $\eta$ is finite by metric subregularity and $\rho < 1$ follows since $\gamma < 1$ and $\underline p \leq 1$.
\end{proof}

One important remark about~\Cref{eq: thm_lin} is that the knowledge of the metric subregularity constant $\eta$ is not needed for running the algorithm.
Step sizes are chosen as~\eqref{eq: ss_rules} and linear convergence follows directly when~\Cref{as: ms} holds.
Important examples where~\Cref{as: ms} holds are given in Section~\ref{sec: ms}.

Even though~\Cref{as: ms} is more general than prior assumptions for linear convergence and our result is agnostic to the choice of the step size, we observe in practice that SPDHG can be much faster than the rate derived in~\Cref{eq: thm_lin}.
We reflect on this issue more in Section~\ref{sec: conclusion} and present some open questions in this context.

\begin{remark}\label{rem: metric_sub}
Strictly speaking, metric subregularity is used in Theorem~\ref{eq: thm_lin} in the weighted norm, \emph{i.e.,}
\begin{equation*}
\dist_S(z, \mathcal{Z}^\star) \leq \eta \dist_S(0, Fz),
\end{equation*}
where $S = \diag(\tau^{-1}1_p, \sigma_1^{-1}, \dots, \sigma_n^{-1})$.
In terms of the definition in~\eqref{eq: def_ms} if $\eta_0$ is the constant using the standard Euclidean norm, it is obvious that $\eta \leq \| S\|\| S^{-1}\| \eta_0$, but we use $\eta$ in~\Cref{eq: thm_lin} since it can be smaller, resulting in a better rate.
\end{remark}

\subsection{Sublinear convergence}
In this section, we prove $\mathcal{O}(1/k)$ convergence rates for the ergodic sequence with different optimality measures. 
\subsubsection{Convergence of expected primal-dual gap}\label{sec: pd_gap}
We recall the definition of the primal-dual gap function,
\begin{align}
G(\bar{x}, \bar{y}) &= \sup_{z\in\mathcal{Z}} \mathcal{H}(\bar{x}, \bar{y}; x, y) \notag \\
&:= \sup_{z\in\mathcal{Z}} g(\bar x) + \langle A \bar x, y \rangle - f^\ast(y) - g(x) - \langle Ax, \bar y \rangle + f^\ast(\bar y).\label{eq: h_def}
\end{align}

It is also possible to consider the restricted primal-dual gap in the sense of~\cite{chambolle2018stochastic, chambolle2011first}, which for any set $\mathcal{B} = \mathcal{B}_x \times \mathcal{B}_y \subseteq\mathcal{Z}$ would correspond to
\begin{equation}\label{eq: def_pdgap}
G_{\mathcal{B}}(\bar{x}, \bar{y}) = \sup_{z\in\mathcal{B}} \mathcal{H}(\bar{x}, \bar{y}; x, y).
\end{equation}
The main quantity of interest for randomized algorithms is the expected restricted primal-dual gap $\mathbb{E}\left[ G_{\mathcal{B}}(\bar x, \bar y) \right]$.
As also mentioned in~\cite{dang2014randomized}, showing convergence rate for this quantity is not straightforward, as the interplay of supremum and expectation can be problematic.
In~\cite{dang2014randomized}, convergence rate is shown in a weaker measure named as perturbed gap function.
We show in the sequel that obtaining the guarantee in expected primal-dual gap is also possible, however, with a more involved analysis.

The expected primal-dual gap proof in~\cite{chambolle2018stochastic} has a technical issue, near the end of the proof in~\cite[Theorem 4.3]{chambolle2018stochastic}. Since the supremum of expectation is upper bounded by the expectation of the supremum, which is in the definition of expected primal-dual gap~\eqref{eq: def_pdgap}, the order of expectation in the proof is incorrect. As we could not find a simple way of fixing the issue using the existing techniques, we introduce a new technique and provide a proof to show that the conclusions of~\cite[Theorem 4.3]{chambolle2018stochastic}, for the primal-dual gap, are still correct, with different constants in the bound.

Our technique in the following proof is inspired by the stochastic approximation literature of variational inequalities and saddle point problems~(see~\cite[Lemma 3.1, Lemma 6.1]{nemirovski2009robust} for a reference), where such an analysis is used to obtain $\mathcal{O}(1/\sqrt{k})$ rates. In the new proof, we adapt this idea by using the structure of primal-dual coordinate descent to obtain the optimal $\mathcal{O}(1/k)$ rate of convergence. Our technique uses the Euclidean structure of the dual update of SPDHG, therefore might not be directly applicable to cases where general Bregman distances are used for proximal operator, such as in~\cite{lan2018optimal,lan2018random}.

We start with a lemma to decouple supremum and expectation in the proof.
\begin{lemma}\label{lem: new_erg_decoupling}
Given a point $\tilde{y}^1 \in \mathcal{Y}$, for $k \geq 1$, we define the sequences
\begin{equation}\label{eq: def_vk}
v^{k+1} = y^k - \hat{y}^{k+1} - P^{-1}(y^{k} - y^{k+1}),~~ \text{ and, }~~ \tilde{y}^{k+1} = \tilde{y}^{k} - P v^{k+1}.
\end{equation}
Then, we have for any $y\in\mathcal{Y}$,
\begin{align}
\sum_{k=1}^K \langle \tilde{y}^{k} - y, v^{k+1} \rangle_{\Dsigma^{-1}} &\leq \frac{1}{2} \| \tilde{y}^1 - y \|^2_{\Dsigma^{-1}P^{-1}} + \sum_{k=1}^K \frac{1}{2} \| v^{k+1} \|^2_{\Dsigma^{-1}P},\label{eq: yt_bd}\\
\mathbb{E}\left[  \sum_{k=1}^K \frac{1}{2} \| v^{k+1} \|^2_{\Dsigma^{-1}P}\right] &\leq \frac{1}{C_1} \Delta^0. \label{eq: sumvk_bd}
\end{align}
Moreover, $v^k$ and $\tilde{y}^{k}$ are $\mathcal{F}_k$-measurable and $\mathbb{E}_k \left[ v^{k+1} \right] = 0$.
\end{lemma}
\begin{proof}
For brevity in this proof, we denote $\Upsilon = \Dsigma^{-1}P^{-1}$. 
We have $\forall y\in\mathcal{Y}$,
\begin{align}
\frac{1}{2}\| \tilde{y}^{k+1} - y \|^2_\Upsilon &= \frac{1}{2} \| \tilde{y}^{k} - y \|^2_\Upsilon - \langle P v^{k+1}, \tilde{y}^{k}-y\rangle_\Upsilon + \frac{1}{2} \| Pv^{k+1} \|^2_\Upsilon\notag \\
&=\frac{1}{2} \| \tilde{y}^{k} - y \|^2_{\Dsigma^{-1}P^{-1}} - \langle v^{k+1}, \tilde{y}^{k}-y\rangle_{\Dsigma^{-1}} + \frac{1}{2} \| v^{k+1} \|^2_{\Dsigma^{-1}P}.\notag
\end{align}
Summing this equality gives the first result. 

For the second result, we use $\mathbb{E}_k \left[ P^{-1}(y^{k} - y^{k+1}) \right] = y^k - \hat{y}^{k+1}$, tower property, and the definition of variance,
\begin{align*}
\mathbb{E}\left[  \sum_{k=1}^K \frac{1}{2} \| v^{k+1} \|^2_{\Dsigma^{-1}P}\right] &= \sum_{k=1}^K  \frac{1}{2}\mathbb{E}\left[ \mathbb{E}_k \left[ \| v^{k+1} \|^2_{\Dsigma^{-1}P} \right]\right] \notag \\
&\leq \sum_{k=1}^K  \frac{1}{2}\mathbb{E}\left[ \mathbb{E}_k \left[ \| P^{-1}(y^{k+1}-y^k) \|^2_{\Dsigma^{-1}P} \right]\right] \notag\\
&=\sum_{k=1}^K \frac{1}{2}\mathbb{E}\left[ \|y^{k+1}-y^k \|^2_{\Dsigma^{-1}P^{-1}} \right] \leq \frac{1}{C_1} \Delta^0, \notag
\end{align*}
where the last inequality follows by $\sum_{k=1}^{\infty}\mathbb{E}\left[V(z^{k+1}-z^k)\right] \leq \Delta^0$ from~\Cref{thm: conv_as} and
$\frac{1}{2} \| y^{k+1} - y^k \|^2_{\Dsigma^{-1}P^{-1}} \leq \frac{1}{C_1} V(z^{k+1}-z^k)$ from~\Cref{lem: one_iteration}.

Other results follow immediately by the definition of the sequences and the equality $\mathbb{E}_k \left[ y^{k+1} - y^k \right] = P(\hat{y}^{k+1}-y^k)$.
\end{proof}

A direct proof of~\Cref{lem: one_iteration} would proceed by developing terms involving random quantities, by utilizing conditional expectations (see~\cite{chambolle2018stochastic}).
In this case, however, our approach is to proceed without using conditional expectation since the quantity of interest requires us to take first supremum and then the expectation of the estimates.
Our proof strategy is to characterize the error term, and then utilize the results~\Cref{lem: new_erg_decoupling} to decouple and bound this term.
First, we give the variant of~\Cref{lem: one_iteration} without taking expectations, with its proof given in~Section~\ref{sec: lemma_for_erg}.
\begin{lemma}\label{eq: lem_ergo_noexp}
We define $ f^\ast_{P}(y) = \sum_{i=1}^n p_i f^\ast_i(y_i)$,
and similar to~\eqref{eq: bregman_f} $D_{f^\ast}^{P}(\bar y; z) = \sum_{i=1}^n p_i f^\ast_i(\bar y_i) - p_if^\ast_i(y_i) - \langle (Ax)_i, p_i(\bar y - y)_i\rangle$ and recall the definitions of $V$ and $V_k$ from
\Cref{lem: one_iteration} and $\mathcal{H}$ from~\eqref{eq: h_def}.
Then, it holds that
\begin{align}\label{eq: lem49first}
\mathcal{H}(x^k, y^{k+1}; x, y) &\leq V_k(x^{k-1} - x, y^k - y) - V_{k+1}(x^k - x, y^{k+1} - y) - V(z^k - z^{k-1}) \notag \\
&+ \mathcal{E}^k+ D_{f^\ast}^{P^{-1}-I}(y^k; z) - D_{f^\ast}^{P^{-1}-I}(y^{k+1}; z) - \langle y, v^{k+1} \rangle_{\sigma^{-1}},
\end{align}
where $v^{k+1} = y^k - \hat{y}^{k+1} - P^{-1}(y^{k} - y^{k+1})$ and
\begin{align}
\mathcal{E}^k &= \frac{1}{2} \big[ \| y^k \|^2_{\sigma^{-1}} - \| \hat{y}^{k+1} \|^2_{\sigma^{-1}} - \big(\|y^k\|^2_{\sigma^{-1}P^{-1}} - \| y^{k+1} \|^2_{\sigma^{-1}P^{-1}} \big) \big]\notag \\
&+\frac{1}{2} \| y^{k+1} - y^{k}\|^2_{\sigma^{-1}P^{-1}} - \frac{1}{2} \| \hat{y}^{k+1}-y^k\|^2_{\sigma^{-1}} +f^\ast(y^k) - f^\ast(\hat{y}^{k+1})\notag\\
& -(f^\ast_{P^{-1}}(y^k) - f^\ast_{P^{-1}}(y^{k+1})) -\langle Ax^k, y^k - \hat{y}^{k+1} - P^{-1}(y^k - y^{k+1}) \rangle,\label{eq: e_def}
\end{align}
and also $\mathbb{E}_k \left[ \mathcal{E}^k \right] = 0$.
\end{lemma}

With this lemma, we identify the problematic inner product for deriving the rate for expected gap: $\langle y, v^{k+1} \rangle$~(see \eqref{eq: lem49first}). 
This is the only term coupling the free variable $z$ and random term $v^{k+1}$.
In the next theorem, we use~\Cref{lem: new_erg_decoupling} to manipulate this inner product. For the rest, we can observe in~\eqref{eq: lem49first} that the terms with $V_k$ telescopes, $\mathcal{E}^k$ has expectation $0$ and it is independent of free variable $z$.
\begin{theorem}\label{th: new_gap_th}
Let~\Cref{as: asmp1} hold. Define the sequences $x^K_{av} = \frac{1}{K}\sum_{k=1}^K x^k$ and $y^{K+1}_{av} = \frac{1}{K}\sum_{k=1}^K y^{k+1}$.
Then, for any set $ \mathcal{B} =\mathcal{B}_x \times \mathcal{B}_y\subseteq \mathcal{Z}$, the following result holds for the expected restricted primal dual gap defined in~\eqref{eq: h_def}
\begin{align}
\mathbb{E}\left[ \sup_{z \in \mathcal{B} } \mathcal{H}(x^K_{av}, y^{K+1}_{av}; x, y) \right] = \mathbb{E}\left[ G_{\mathcal{B}}(x^K_{av}, y^{K+1}_{av}) \right] \leq \frac{C_\mathcal{B}}{K},\label{eq: pd_gap}
\end{align}
where
\begin{multline*}
C_{\mathcal{B}} = \frac{1+2c}{2}\sup_{x\in\mathcal{B}_x} \| x^0 - x \|^2_{\tau^{-1}} + \sup_{y\in\mathcal{B}_y} \|{y}^1-y\|^2_{\sigma^{-1}P^{-1}} + f^\ast_{P^{-1}-I}(y^1) - f^\ast_{P^{-1}-I}(y^\star) \\
+\left(\frac{1}{C_1}+2c + c_1\right) \Delta^0
+ c\|x^0\|^2_{\tau^{-1}}+c \| y^1 -y^\star\|^2_{\sigma^{-1}P^{-1}} 
  + \frac{\|\sigma^{1/2}A\tau^{1/2}\|^2}{2c_1\underline p} \|x^\star \|^2_{\tau^{-1}},
\end{multline*}
where $c_1 = \|\tau^{1/2}A^\top\sigma^{1/2}P^{-1/2}\|$, $c=\| \tau^{1/2}A^\top(P^{-1}-I)\sigma^{1/2}P^{1/2} \|$, $C_1=1-\gamma$.
\end{theorem}

\begin{proof}
We start from the result of~\Cref{eq: lem_ergo_noexp}.
We have for the last term in~\eqref{eq: lem49first}
\begin{align}
-\langle y, v^{k+1} \rangle_{\Dsigma^{-1}} &= \langle \tilde{y}^{k} - y, v^{k+1} \rangle_{\Dsigma^{-1}} - \langle \tilde{y}^{k}, v^{k+1} \rangle_{\Dsigma^{-1}},\label{eq: new_erg_err1}
\end{align}
where $\tilde{y}^{k}$ is the random sequence defined in~\Cref{lem: new_erg_decoupling}.

We sum~\eqref{eq: lem49first} after using~\eqref{eq: new_erg_err1} and~\Cref{lem: one_iteration}
\begin{align}
\sum_{k=1}^K &\mathcal{H}(x^k, {y}^{k+1}; x, y) \leq -V_{K+1}(x^K - x, y^{K+1} - y) + V_1(x^0 - x, y^1 - y) \notag \\
&+D_{f^\ast}^{P^{-1}-I}(y^1; z) - D_{f^\ast}^{P^{-1}-I}(y^{K+1}; z) \notag \\
&+\sum_{k=1}^K \left(\langle \tilde{y}^{k} - y, v^{k+1} \rangle_{\Dsigma^{-1}} - \langle \tilde{y}^{k}, v^{k+1} \rangle_{\Dsigma^{-1}}+\mathcal{E}^k\right), \label{eq: main_ineq2}
\end{align}
Next, by Young's inequality (see also~\eqref{eq: pf_inner_product})
\begin{equation}\label{eq: new_erg_young1}
-\langle A(x-x^K), P^{-1}(y^{K+1} - y^K) \rangle \leq \frac{\gamma}{2} \| x-x^K \|^2_{\tau^{-1}}
+ \frac{\gamma}{2} \| y^{K+1} - y^K \|^2_{\Dsigma^{-1}P^{-1}}.
\end{equation}
On~\eqref{eq: main_ineq2}, we use~\eqref{eq: yt_bd} from~Lem.~\ref{lem: new_erg_decoupling} with $\tilde y^1 \!=\! y^1\!=\! y^0$,~\eqref{eq: new_erg_young1} with the definition of $V_{K+1}(x^K \!-\! x, y^{K+1} \!-\! y)$ from~Lem.~\ref{lem: one_iteration} (see also~\eqref{eq: vk_lw_bd},~\eqref{eq: pf_inner_product}), and $\gamma \!<\! 1$ from~\eqref{eq: ss_rules}
\begin{align}
&\sum_{k=1}^K \mathcal{H}(x^k, {y}^{k+1}; x, y) \leq  \frac{1}{2} \| x^0 - x \|^2_{\tau^{-1}} + \| y^1 - y\|^2_{\Dsigma^{-1}P^{-1}}\notag \\
&+f^\ast_{P^{-1}-I}(y^1) - f^\ast_{P^{-1}-I}(y^{K+1}) + \langle Ax, (P^{-1}-I)(y^{K+1} - y^1) \rangle \notag\\
&+ \sum_{k=1}^K \Big(\frac{1}{2} \| v^{k+1} \|^2_{\Dsigma^{-1}P} -  \langle \tilde{y}^{k}, v^{k+1} \rangle_{\Dsigma^{-1}} +\mathcal{E}^k \Big).\label{eq: main_3}
\end{align}
We have $\langle Ax, (P^{-1}-I)(y^{K+1} - y^1) \rangle \leq c \left(\frac{1}{2} \| x \|^2_{\tau^{-1}} + \frac{1}{2} \| y^{K+1} - y^1 \|^2_{\Dsigma^{-1}P^{-1}}\right)$, where $c=\| \tau^{1/2}A^\top(P^{-1}-I)\sigma^{1/2}P^{1/2} \|$ and $\frac{1}{2} \| x \|^2_{\tau^{-1}} \leq  \| x-x^0\|^2_{\tau^{-1}} +  \| x^0 \|^2_{\tau^{-1}}$.

We use these inequalities, arrange~\eqref{eq: main_3}, and divide both sides by $K$
\begin{align}
\frac{1}{K} \sum_{k=1}^K \mathcal{H}(x^k, y^{k+1}; x, y)  &\leq \frac{1}{K} \bigg\{\frac{1+2c}{2} \| x^0 - x \|^2_{\tau^{-1}} + \| y^1-y\|^2_{\Dsigma^{-1}P^{-1}}+c\| x^0\|^2_{\tau^{-1}} \notag\\
&+\frac{c }{2} \| y^{K+1}- y^1 \|^2_{\Dsigma^{-1}P^{-1}} +f^\ast_{P^{-1}-I}(y^1) - f^\ast_{P^{-1}-I}(y^{K+1}) \notag\\
&+ \sum_{k=1}^K \left(\frac{1}{2} \| v^{k+1} \|^2_{\Dsigma^{-1}P} - \langle \tilde{y}^{k}, v^{k+1} \rangle_{\Dsigma^{-1}}+ \mathcal{E}^k\right) \bigg\}. \label{eq: for_smgap}
\end{align}
We now take supremum of~\eqref{eq: for_smgap} with respect to $z$, note that only the first two terms on the right hand side depend on $z=(x, y)$, and $x^0$, ${y}^1$ are deterministic. 
Then we take expectation of both sides of~\eqref{eq: for_smgap} 
\begin{align}
\mathbb{E} \bigg[ \sup_{z\in\mathcal{B}} &\frac{1}{K} \sum_{k=1}^K \mathcal{H}(x^k, y^{k+1}; x, y) \bigg] \leq \frac{1}{K} \Bigg\{\sup_{z \in\mathcal{B}} \left\{\frac{1+2c}{2}\| x^0 - x \|^2_{\tau^{-1}} + \| y^1-y\|^2_{\Dsigma^{-1}P^{-1}}\right\} \notag\\
&+ \mathbb{E}\left[ \frac{c }{2}\| y^{K+1}- y^1 \|^2_{\Dsigma^{-1}P^{-1}}  + f^\ast_{P^{-1}-I}(y^1) - f^\ast_{P^{-1}-I}(y^{K+1}) \right] + c\| x^0 \|^2_{\tau^{-1}}\notag\\
&+ \sum_{k=1}^K \frac{1}{2} \mathbb{E}\left[ \| v^{k+1} \|^2_{\Dsigma^{-1}P}\right] - \sum_{k=1}^K \mathbb{E}\left[ \langle \tilde{y}^{k}, v^{k+1} \rangle_{\Dsigma^{-1}}\right] + \sum_{k=1}^K\mathbb{E}\left[\mathcal{E}^k \right]\Bigg\}. \label{eq: gap_last}
\end{align}
As $\tilde{y}^k$ is $\mathcal{F}_k$-measurable, $\mathbb{E}_k \left[v^{k+1}\right] = 0$, by~\Cref{lem: new_erg_decoupling}, and by the tower property,
\begin{align}
\mathbb{E} \left[\sum_{k=1}^K \langle \tilde{y}^{k}, v^{k+1} \rangle_{\Dsigma^{-1}}\right] &=  \sum_{k=1}^K \mathbb{E}\left[ \mathbb{E}_k \left[ \langle \tilde{y}^{k}, v^{k+1} \rangle_{\Dsigma^{-1}} \right]\right] \notag \\
&=  \sum_{k=1}^K \mathbb{E}\left[  \langle \tilde{y}^{k}, \mathbb{E}_k[ v^{k+1}] \rangle_{\Dsigma^{-1}} \right] = 0.\label{eq: new_erg_err_lin}
\end{align}
On~\eqref{eq: gap_last}, we use~\eqref{eq: sumvk_bd} from~\Cref{lem: new_erg_decoupling},~\eqref{eq: new_erg_err_lin} and \\
$\sum_{k=1}^K \mathbb{E}[\mathcal{E}^k] = \sum_{k=1}^K \mathbb{E}\left[\mathbb{E}_k\left[\mathcal{E}^k\right]\right] = 0$, which follows from~\Cref{eq: lem_ergo_noexp} along with the tower property, to obtain
\begin{align}\label{eq: new_erg_lastbd}
\mathbb{E} \bigg[ \sup_{z\in\mathcal{B}}\frac{1}{K} \sum_{k=1}^K \mathcal{H}(x^k, y^{k+1}; x, y) \bigg] &\leq \sup_{z\in\mathcal{B}}\left\{ \frac{1+2c}{2K} \| x^0 - x \|^2_{\tau^{-1}} + \frac{1}{K} \| {y}^1-y\|^2_{\Dsigma^{-1}P^{-1}} \right\} \notag\\
&+\frac{c }{2K} \mathbb{E}\left[ \| y^{K+1}- y^1 \|^2_{\Dsigma^{-1}P^{-1}} \right] + \frac{c}{K}\|x^0\|^2_{\tau^{-1}} \notag \\
&+\frac{1}{K}\mathbb{E}\left[ f^\ast_{P^{-1}-I}(y^1) - f^\ast_{P^{-1}-I}(y^{K+1}) \right] + \frac{1}{C_1 K} \Delta^0.
\end{align}
By~\Cref{thm: conv_as} and~\Cref{lem: one_iteration}, $\mathbb{E} \left[ \|y^{K+1} - y^\star \|^2_{\Dsigma^{-1}P^{-1}} \right] \leq 2\Delta^0$, and by Jensen's inequality, $\mathbb{E} \left[ \| y^{K+1}-y^\star \|_{\Dsigma^{-1}P^{-1}} \right] \leq \sqrt{2\Delta^0}$.
With these estimations we have
\begin{equation}\label{eq: erg_bd_yk_term}
\mathbb{E} \left[ \| y^{K+1}-y^1 \|^2_{\Dsigma^{-1}P^{-1}} \right] \leq 2 \| y^1-y^\star \|^2_{\Dsigma^{-1}P^{-1}} + 4\Delta^0.
\end{equation} 
As $f_i$ is proper, l.s.c., convex, and $A_i x^\star \in \partial f^\ast_i(y^\star_i)$, we additionally note that 
\begin{align*}
 f^\ast_i(y^{K+1}_i)  &\geq  f^\ast_i(y^\star_i) + \langle A_i x^\star, y^{K+1}_i - y^\star_i \rangle \notag \\
 &\geq f^\ast_i(y^\star_i) - \| A_i x^\star \|_{\sigma_i}  \| y^{K+1}_i - y^\star_i \|_{\Dsigma_i^{-1}},
\end{align*}
and by substitution, Young's inequality, and defining $c_1 = \|\tau^{1/2}A^\top\sigma^{1/2}P^{-1/2}\|$
\begin{align}
\mathbb{E} \big[ f^\ast_{P^{-1}-I}&(y^{K+1})\big] = \sum_{i=1}^n \left( \frac{1}{p_i} - 1 \right) \mathbb{E} \left[ f^\ast_i(y^{K+1}_i)\right] \notag \\
&\geq \sum_{i=1}^n \left( \frac{1}{p_i} - 1 \right) \Big(f^\ast_i(y^\star_i) - \frac{1}{2c_1}\| A_i x^\star \|^2_{\sigma_i} -\frac{c_1}{2}\mathbb{E}\left[ \| y^{K+1}_i - y^\star_i \|^2_{\Dsigma_i^{-1}} \right] \Big) \notag \\
&\geq \sum_{i=1}^n \left( \frac{1}{p_i} - 1 \right) f^\ast_i(y^\star_i) - \frac{1}{2c_1\underline p}\| A x^\star \|^2_{\sigma}-\frac{c_1}{2}\mathbb{E}\|y^{K+1}-y^\star \|^2_{\sigma^{-1}P^{-1}}.\label{eq: fconj_lbd}
\end{align}
We now use~\eqref{eq: erg_bd_yk_term} and~\eqref{eq: fconj_lbd} in~\eqref{eq: new_erg_lastbd}, use $\mathbb{E} \left[ \|y^{K+1} - y^\star \|^2_{\Dsigma^{-1}P^{-1}} \right] \leq 2\Delta^0$ and use definition of $c_1$ to obtain
\begin{align}
\mathbb{E} \Bigg[ \sup_{z\in\mathcal{B}}&\frac{1}{K} \sum_{k=1}^K \mathcal{H}(x^k, y^{k+1}; x, y) \Bigg] \leq \frac{1+2c}{2K}\sup_{x\in\mathcal{B}_x} \| x^0 - x \|^2_{\tau^{-1}} +\frac{1}{K} \sup_{y\in\mathcal{B}_y} \| y^1-y\|^2_{\Dsigma^{-1}P^{-1}} \notag\\
&+\frac{c}{K} \| y^1-y^\star \|^2_{\Dsigma^{-1}P^{-1}} + \frac{2c}{K}\Delta^0 + \frac{c}{K} \|x^0\|^2_{\tau^{-1}} + \frac{1}{K} \left(f^\ast_{P^{-1}-I}(y^1)-f^\ast_{P^{-1}-I}(y^\star) \right)\notag \\
&+ \frac{\|\sigma^{1/2}A\tau^{1/2}\|^2}{2c_1\underline p K} \|x^\star \|^2_{\tau^{-1}} + \frac{c_1}{K} \Delta^0 + \frac{1}{C_1K } \Delta^0 =: \frac{C_{\mathcal{B}}}{K}.\notag 
\end{align}
We define as $C_{\mathcal{B}}$ the constant of right hand side and use Jensen's inequality on the left hand side with definitions of $x^K_{av}$ and $y^{K+1}_{av}$ to get the result.
\end{proof}
\begin{remark}
In Thm.~\ref{th: new_gap_th}, when $p_i=\frac1 n$, setting scalar step sizes $\tau=\frac{1}{n\max_i\|A_i\|}$, $\sigma= \frac{1}{\max_i\|A_i\|}$ in view of~\eqref{eq: ss_rules} gives $\mathcal{O}(n[\|A\| + f^\ast(y^1)-f^\ast(y^\star)]\cdot\max_{z\in\mathcal{B}}[\|x\|^2+\|y\|^2])$ as the worst case order for $C_{\mathcal{B}}$.
\end{remark}

\subsubsection{Convergence of objective values}
\label{sec: ergodic_rate_sec}
The guarantee for the expected global primal-dual gap (see~\eqref{eq: pd_gap}) requires bounded primal and dual domains.

In this section, we show that $\mathcal{O}(1/k)$ rate of convergence in terms of objective values and/or feasibility can be shown with possibly unbounded primal and dual domains.
The case $f(\cdot) = \delta_b(\cdot)$ is studied in~\cite{luke2018block} and a similar result was derived.
The rate in~\cite{luke2018block} has a different nature in the sense that it is an almost sure rate where the constant depends on trajectory, whereas our rate is in expectation.
We use the smoothed gap function introduced in~\cite{tran2018smooth}, which,
for~\eqref{eq: prob_temp}, is defined as
\begin{equation}\label{eq: sm_gap_def}
\mathcal{G}_{\alpha, \beta}(x, y; \dot{x}, \dot{y}) = \sup_{u, v} g(x) + \langle Ax, v \rangle - f^\ast(v)
- g(u) - \langle Au, y \rangle + f^\ast(y) - \frac{\alpha}{2} \| u - \dot{x}\|^2 - \frac{\beta}{2}\| v-\dot{y}\|^2.
\end{equation}

\begin{theorem}\label{thm: thm_ergodic}
Let~\Cref{as: asmp1} hold.
We recall ${x}^K_{av} = \frac{1}{K} \sum_{k=1}^K x^k$.\\
$\bullet$ If $f$ is $L(f)$-Lipschitz continuous  and $y^1 \in \dom{f^\ast}$,
\begin{equation*}
\mathbb{E}\left[ f(A{x}^K_{av}) + g({x}^K_{av}) - f(Ax^\star) - g(x^\star) \right] \leq \frac{C_{e,1}}{{K}}.
\end{equation*}
$\bullet$ If $f(\cdot) = \delta_{\{b\}}(\cdot)$ with $b\in\mathcal{Y}$,
\begin{align*}
\mathbb{E} \left[  g(x^K_{av}) - g(x^\star) \right] \leq \frac{C_{e, 2}}{{K}},~~~ \mathbb{E} \left[ \|Ax^K_{av} - b\|_{\diag(\sigma)P} \right] \leq \frac{C_{e, 3}}{{K}},
\end{align*}
where (see~\Cref{th: new_gap_th})
\begin{multline*}
C_e = f^\ast_{P^{-1}-I}(y^1) - f^\ast_{P^{-1}-I}(y^\star)
+\left(\frac{1}{C_1}+2c + c_1\right) \Delta^0
\\ 
+ c\|x^0\|^2_{\tau^{-1}}+c \| y^1 -y^\star\|^2_{\sigma^{-1}P^{-1}}   + \frac{\|\sigma^{1/2}A\tau^{1/2}\|^2}{2c_1\underline p} \|x^\star \|^2_{\tau^{-1}},
\end{multline*}
$C_{e, 1} = C_e + \frac{2}{\underline{p}} L(f)^2 + \frac{1+2\gamma}{2} \| x^0 - x^\star \|^2_{\tau^{-1}}$, \\
$C_{e,3}=\frac{1}{2} \big\{ \| y^\star - {y}^1 \|_{\Dsigma^{-1}P^{-1}} +\big( \| y^\star - {y}^1 \|^2_{\Dsigma^{-1}P^{-1}} + 4C_e + 6 \| x^\star - x^0 \|_{\tau^{-1}}\big)^{1/2}\big\}$, \\
$C_{e, 2} = C_e + \frac{1}{2} \| y^\star - {y}^1 \|^2_{\Dsigma^{-1}P^{-1}} + \frac{1+2\gamma}{2} \| x^0 - x^\star \|^2_{\tau^{-1}} + \| y^\star \|_{\Dsigma^{-1}P^{-1}} C_{e,3}$.
\end{theorem}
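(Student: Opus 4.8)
The plan is to obtain Theorem~\ref{thm: thm_ergodic} from the smoothed-gap estimate of Theorem~\ref{th: new_gap_th}, in the spirit of~\cite[Lemma 1]{tran2018smooth}. Writing $Q = D(\sigma)^{-1}P^{-1}$ and expanding the inner suprema (with the weighted penalties used in Theorem~\ref{th: new_gap_th}),
\begin{equation*}
\mathcal{G}_{\alpha,\beta}(x,y;\dot x,\dot y) = g(x) + S_\beta(Ax) + f^\ast(y) - \inf_{u}\Big( g(u) + \langle Au, y\rangle + \tfrac{\alpha}{2}\|u - \dot x\|^2_{\tau^{-1}}\Big),
\end{equation*}
where $S_\beta(w) = \sup_v\big(\langle w, v\rangle - f^\ast(v) - \tfrac{\beta}{2}\|v - \dot y\|^2_{Q}\big)$ is a smoothing of $f$. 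Taking $u = x^\star$ in the infimum and Fenchel--Young for $f^\ast(y)$ at $Ax^\star$ gives, on every trajectory,
\begin{equation*}
\mathcal{G}_{\alpha,\beta}(x^K_{av}, y^{K+1}_{av}; x^0, \dot y) \geq g(x^K_{av}) + S_\beta(Ax^K_{av}) - g(x^\star) - f(Ax^\star) - \tfrac{\alpha}{2}\|x^\star - x^0\|^2_{\tau^{-1}},
\end{equation*}
so it remains to lower bound $S_\beta(Ax^K_{av})$ in each case, then take expectations using $\mathbb{E}[\mathcal{G}_{\alpha,\beta}(x^K_{av}, y^{K+1}_{av}; x^0, \dot y)] \leq C_e/K$ (with $\alpha,\beta = \Theta(1/K)$) from Theorem~\ref{th: new_gap_th} and $x^K_{av} \in \dom g$.

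For the Lipschitz case I would choose the free point of Lemma~\ref{lem: new_erg_decoupling} as $\tilde y^1 = y^1$, so that $\dot y = y^1 \in \dom f^\ast$. Lipschitz continuity of $f$ makes $\dom f^\ast$ bounded, so $R^2 := \sup_{v\in\dom f^\ast}\|v - \dot y\|^2_Q$ is finite and of order $L(f)^2/\underline p$; dropping the quadratic penalty inside $S_\beta$ gives $S_\beta(Ax^K_{av}) \geq f(Ax^K_{av}) - \tfrac{\beta}{2}R^2$, hence $f(Ax^K_{av}) + g(x^K_{av}) - f(Ax^\star) - g(x^\star) \leq \mathcal{G}_{\alpha,\beta}(x^K_{av}, y^{K+1}_{av}; x^0, y^1) + \tfrac{\beta}{2}R^2 + \tfrac{\alpha}{2}\|x^\star - x^0\|^2_{\tau^{-1}}$. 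Taking expectations and multiplying by $K$ yields the first bound with $C_{e,1}$.

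For the feasibility case $f(\cdot) = \delta_b(\cdot)$ we have $f^\ast = \langle b, \cdot\rangle$, $f(Ax^\star) = 0$, and $S_\beta$ is explicit: $S_\beta(w) = \langle w - b, \dot y\rangle + \tfrac{1}{2\beta}\|w - b\|^2_{D(\sigma)P}$. Using $Ax^\star = b$ and $-A^\top y^\star \in \partial g(x^\star)$ to get $g(x^K_{av}) - g(x^\star) \geq -\langle y^\star, Ax^K_{av} - b\rangle$ and a Cauchy--Schwarz split of $\langle Ax^K_{av} - b, \dot y - y^\star\rangle$, the bound above becomes
\begin{equation*}
\tfrac{1}{2\beta}\|Ax^K_{av} - b\|^2_{D(\sigma)P} - \|\dot y - y^\star\|_Q\,\|Ax^K_{av} - b\|_{D(\sigma)P} \leq \mathcal{G}_{\alpha,\beta}(x^K_{av}, y^{K+1}_{av}; x^0, \dot y) + \tfrac{\alpha}{2}\|x^\star - x^0\|^2_{\tau^{-1}}.
\end{equation*}
Taking expectations, applying Jensen ($(\mathbb{E}\|Ax^K_{av} - b\|_{D(\sigma)P})^2 \leq \mathbb{E}\|Ax^K_{av} - b\|^2_{D(\sigma)P}$) and $\mathbb{E}[\mathcal{G}_{\alpha,\beta}] \leq C_e/K$ turns this into a quadratic inequality in $t := \mathbb{E}\|Ax^K_{av} - b\|_{D(\sigma)P}$, whose solution gives the feasibility bound with $C_{e,3}$. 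For the $g$-residual, instead use a Young split $\langle Ax^K_{av} - b, \dot y - y^\star\rangle \geq -\tfrac{\beta}{2}\|\dot y - y^\star\|^2_Q - \tfrac{1}{2\beta}\|Ax^K_{av} - b\|^2_{D(\sigma)P}$ (cancelling the quadratic in $S_\beta$) and then $-\langle y^\star, Ax^K_{av} - b\rangle \leq \|y^\star\|_Q\|Ax^K_{av} - b\|_{D(\sigma)P}$, which gives $g(x^K_{av}) - g(x^\star) \leq \mathcal{G}_{\alpha,\beta} + \tfrac{\beta}{2}\|\dot y - y^\star\|^2_Q + \tfrac{\alpha}{2}\|x^\star - x^0\|^2_{\tau^{-1}} + \|y^\star\|_Q\|Ax^K_{av} - b\|_{D(\sigma)P}$; taking expectations and inserting the already-proved rate $\mathbb{E}\|Ax^K_{av} - b\|_{D(\sigma)P} \leq C_{e,3}/K$ yields $C_{e,2}$.

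The main obstacle is the feasibility case. The supremum defining $S_\beta$ is over the unbounded space $\mathcal{Y}$, so the feasibility residual must be extracted through the $\tfrac{1}{2\beta}\|\cdot\|^2$ term rather than by boundedness of $\dom f^\ast$; and since $\mathcal{G}_{\alpha,\beta}(x^K_{av}, y^{K+1}_{av}; x^0, \dot y)$ is itself random, the resulting quadratic inequality has to be passed through the expectation by Jensen rather than solved pathwise. The remaining work is bookkeeping: tracking the Young/Cauchy--Schwarz constants, the factors $1/\underline p$ coming from $P^{-1}$ and the step-size rule, and the three weighted norms $\|\cdot\|_{\tau^{-1}}$, $\|\cdot\|_{D(\sigma)^{-1}P^{-1}}$, $\|\cdot\|_{D(\sigma)P}$ so that they collapse into $C_{e,1}, C_{e,2}, C_{e,3}$. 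The Lipschitz case is routine once $\tilde y^1 = y^1$ is fixed so that $\dot y$ lies in the bounded set $\dom f^\ast$.
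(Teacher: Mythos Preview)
Your proposal is correct and follows essentially the same route as the paper: both start from the smoothed-gap estimate of Theorem~\ref{th: new_gap_th} and then extract objective/feasibility rates, with the paper simply citing~\cite{fercoq2019coordinate} for the Lipschitz case and~\cite[Lemma~1]{tran2018smooth} for the indicator case, while you unpack those arguments explicitly. Your choice $\tilde y^1 = y^1$ (so that $\dot y \in \dom f^\ast$) and the Jensen-plus-quadratic-inequality treatment of the feasibility residual are exactly what those cited results amount to.
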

\begin{proof}
For the smoothed gap (see~\eqref{eq: sm_gap_def}), from~\Cref{th: new_gap_th}, we have
\begin{align*}
\mathbb{E}\left[ \mathcal{G}_{\frac{1+2\gamma}{2K}, \frac{1}{2K}}(x^K_{av}, y^{K+1}_{av}; x^0, y^1) \right] \leq \frac{C_e}{K}.
\end{align*}
To see this, we proceed the same as in the proof of~\Cref{th: new_gap_th} until~\eqref{eq: for_smgap}. Then, we move the terms $\frac{1+2\gamma}{2K} \| x^0 - x \|^2_{\tau^{-1}}$ and $\frac{1}{K} \| {y}^1 - y \|^2_{\sigma^{-1}P^{-1}}$ to the left hand side, take supremum, use the definition of smoothed gap, then take expectations of both sides and use the same estimations as in the first part to conclude.

$\bullet$ When $f$ is Lipschitz continuous in the norm $\| \cdot\| _{\sigma}$, we will argue as in~\cite[Theorem 11]{fercoq2019coordinate}.
On~\eqref{eq: sm_gap_def}, with the parameters used in this theorem, we make the following observations.
By~\cite[Corollary 17.19]{bauschke2011convex}, when $f$ is $L(f)$-Lipschitz continuous in the norm $\| \cdot \| _{\diag(\sigma)}$, it follows that $\|y^1 - y \|_{\Dsigma^{-1}}^2 \leq 4
L(f)^2$.
By Lipschitzness and the definition of conjugate function, we can pick $y\in\partial f(Ax^K_{av}) \neq \emptyset$ such that $\langle Ax^K_{av}, y \rangle - f^\ast(y) = f(Ax^K_{av})$. Next by Fenchel-Young inequality, $f^\ast({y}^{K+1}_{av})-\langle A^\top {y}^{K+1}_{av}, x^\star\rangle \geq -f(Ax^\star)$.
We also use $\underline{p} = \min_i p_i$ to obtain (see~\eqref{eq: sm_gap_def})
\begin{align*}
\mathbb{E} \bigg[ \mathcal{G}_{\frac{1+2\gamma}{K},\frac{1}{K}}(x^{K}_{av}, {y}^{{K}+1}_{av}; &x^{0}, y^1) \bigg]  \geq \mathbb{E} \left[ f(Ax^K_{av}) + g(x^K_{av}) - f(Ax^\star) - g(x^\star)\right] \notag \\
&- \frac{2}{\underline{p} K} L(f)^2 - \frac{1+2\gamma}{2K} \| x^0-x^\star \|^2_{\tau^{-1}},
\end{align*}
where the result directly follows.

$\bullet$ When $f(\cdot) = \delta_b (\cdot)$, we use~\cite[Lemma 1]{tran2018smooth}, to obtain the bounds
\begin{align*}
&\mathbb{E} \left[ g(x^K_{av}) - g(x^\star)\right] \leq \mathbb{E} \left[ \mathcal{G}_{\frac{1+2\gamma}{2K},\frac{1}{2K}}(x^{K}_{av}, {y}^{{K}+1}_{av}; x^{0}, y^1)\right] \notag \\
&+ \frac{1+2\gamma}{2K} \| x^0 - x^\star \|^2_{\tau^{-1}}- \mathbb{E} \left[ \langle y^\star, Ax^K_{av} - b \rangle \right]+ \frac{1}{2K} \| y^\star - y^1 \|^2_{\Dsigma^{-1}P^{-1}}, \notag\\
&\mathbb{E} \left[ \| Ax^K_{av}-b\|_{\diag(\sigma)P} \right] \leq \frac{1}{2K} \bigg\{ \| y^\star - y^1 \|_{\Dsigma^{-1}P^{-1}}+ \bigg(\| y^\star - y^1 \|^2_{\Dsigma^{-1}P^{-1}}  \notag\\
&+4K \mathbb{E} \left[ \mathcal{G}_{\frac{1+2\gamma}{2K},\frac{1}{2K}}(x^{K}_{av}, {y}^{{K}+1}_{av}; x^{0}, y^1) \right] + 2(1+2\gamma) \| x^0 - x^\star \|^2_{\tau^{-1}} \bigg)^{1/2} \bigg\}.
\end{align*}

We use Cauchy-Schwarz inequality and the bound of 
$\mathbb{E}\left[ \| Ax_{av}^K - b\|_{\diag(\sigma)P} \right]$ on $\langle y^\star, Ax^K_{av} - b \rangle$  to conclude.
\end{proof}

\section{Related works}\label{sec: related_works}
We summarize the comparison of the most related primal-dual coordinate descent methods (PDCD) in~\Cref{tab:1} at Page \pageref{tab:2}.\\[2mm]
\textit{Primal camp.}
Stochastic gradient based methods (SGD) can be applied to solve
~\eqref{eq: prob_temp}
~\cite{robbins1951stochastic, nemirovski2009robust}.
SGD cannot get linear convergence except special cases~\cite{necoara2018randomized}.
Variance reduction based methods obtain linear convergence when the functions $f_i$ are smooth and $g$ is strongly convex; or $f_i$ are smooth and strongly convex~\cite{johnson2013accelerating, xiao2014proximal, allen2017katyusha}.
Smoothness of $f_i$ is equivalent to strong convexity of $f_i^\ast$.
Therefore, the linear convergence results of these methods require the similar assumptions as~\cite{chambolle2018stochastic}.
Moreover, as in~\cite{chambolle2018stochastic}, variance reduction based methods require knowing the constants $\mu_i$ and $\mu_g$ to set the algorithmic parameters accordingly, for obtaining linear convergence.

When $f_i(\cdot) =\delta_{\{{b}_i\}}(\cdot)$, SGD-type methods are proposed in~\cite{patrascu2017nonasymptotic, xu2018primal, fercoq2019almost}. However, these methods only obtain $\mathcal{O}(1/k)$ rate with strong convexity of $g$, since they focus on the general problem where the objective can be given in expectation form. Even though this rate is optimal for the given template, it is suboptimal for~\eqref{eq: prob_temp}.\\[2mm]
\textit{Primal-dual camp.}
This line of research uses coordinate descent type schemes for solving~\eqref{eq: prob_temp}.
Coordinate descent with random sampling for unconstrained optimization is proposed in~\cite{nesterov2012efficiency} and later generalized and improved in~\cite{richtarik2014iteration, fercoq2015accelerated}. These methods apply coordinate descent in the primal and obtain linear convergence rates with smooth and strongly convex $f_i$; or smooth $f_i$ and strongly convex $g$.

Another approach is to apply coordinate ascent in the dual to exploit separability of the dual in~\eqref{eq: prob_temp}. Stochastic dual coordinate ascent (SDCA) and its accelerated variant are proposed in~\cite{shalev2013stochastic, shalev2014accelerated}. These methods require smoothness of $f_i$ and strong convexity of $g$ for linear convergence and the strong convexity constants are used in the algorithms for setting the parameters.

The algorithm we analyzed in this paper is SPDHG, proposed in~\cite{chambolle2018stochastic}.
The authors proved linear convergence of the modified method SPDHG-$\mu$~\cite[Theorem 6.1]{chambolle2018stochastic} by assuming strong convexity of $f_i^\ast, g$ and special step sizes depending on  strong convexity constants. Iterate convergence and ergodic $\mathcal{O}(1/k)$ rate results in~\cite[Theorem 4.3]{chambolle2018stochastic} are given in terms of Bregman distances which is not a valid optimality measure in general. 
Our analysis for SPDHG shows linear convergence with standard step sizes in~\eqref{eq: ss_rules} and with weaker metric subregularity assumption (see Section~\ref{sec: ms}). Moreover, in the general convex case, we prove almost sure convergence of the iterates to a solution, which is stronger than Bregman distance based almost sure convergence in~\cite{chambolle2018stochastic}.
Finally, we prove $\mathcal{O}(1/k)$ rate for the ergodic sequence, with possibly unbounded domains, for optimality measures stronger than Bregman distances, such as expected primal-dual gap.
The comparison of the results is also summarized in~\Cref{tab:2}.

PDCD schemes similar to SPDHG are proposed in~\cite{zhang2017stochastic, dang2014randomized, fercoq2019coordinate}. These variants assume strong convexity of $f_i^\ast, g$ for linear rate of convergence. Only~\cite{fercoq2019coordinate} proved linear convergence with step sizes independent of strong convexity constants, to provide a partial answer for adaptivity of PDCD methods to strong convexity. However, as detailed in Table \ref{tab:1}, with dense $A$ matrix and uniform sampling, this method requires step sizes $n$ times smaller than~\eqref{eq: ss_rules} which can be problematic in practice (see~Section~\ref{sec: bp}). For sublinear convergence,~\cite{fercoq2019coordinate} proved $\mathcal{O}(1/\sqrt{k})$ rate on a randomly selected iterate, under similar assumption to ours whereas~\cite{zhang2017stochastic} requires boundedness of the dual domain, setting a horizon and proves primal-only rates.

PDCD algorithms are also studied in \cite{combettes2015stochastic, combettes2019stochastic, pesquet2015class}.
As mentioned in~\cite{fercoq2019coordinate,chambolle2018stochastic}, operator theory-based proofs of these methods require using step sizes depending on global constants about the problem, causing slow performance in practice.
PDCD methods for linearly constrained problems are studied in~\cite{alacaoglu2017smooth, dang2014randomized, luke2018block}, with sublinear rates.

Latafat et al.\ \cite{latafat2019new} proposed TriPD-BC and proved linear convergence for this method under metric subregularity.
There are two drawbacks of TriPD-BC for our setting.
First, when $A$ is not of special structure, such as block diagonal, one needs to use duplication for an efficient implementation (see~\cite{fercoq2019coordinate}).
Second issue is that as in~\cite{fercoq2019coordinate}, this method needs to use $n$ times smaller step sizes with dense $A$.
For the details of duplication and small step sizes, we refer to~\cite{fercoq2019coordinate}.
The need to use small step sizes seriously affects the practical performance of the algorithm (see Section~\ref{sec: bp}).

Some standard references for deterministic primal-dual algorithms are in~\cite{chambolle2011first,chambolle2016ergodic,he2012convergence,tran2018smooth,tran2018adaptive,esser2010general}.
As observed in~\cite{chambolle2018stochastic}, coordinate descent-based variants significantly increase the practical performance of these deterministic methods.

Our results imply global linear convergence for PDHG when $n=1$, answering the question posed in~\cite{chambolle2011first}: \textit{``It would be interesting to understand whether the steps can be estimated in Algorithm 1 without the a priori knowledge of $\mu_i, \mu_g$.''}
In the third part of~\Cref{as: ms}, compact domains are not needed for this case.
We highlight that such behaviour of deterministic primal-dual methods is investigated before in~\cite{liang2016convergence, latafat2019new}.\\[2mm]
\textit{Linear programming.}
A related notion to metric subregularity for linear programming is Hoffman's lemma due to classical result in~\cite{hoffman1952approximate}, which is used to show linear convergence of ADMM-type methods for LPs~\cite{yen2015sparse,yang2016linear,liu2018partial}. The drawback of these approaches is that the knowledge of the constant $\eta$ is required to run the algorithm, which is difficult to estimate. Our analysis recovers these results specific to LPs with a simpler algorithm that does not need the knowledge of $\eta$.

\section{Numerical evidence}
In this section, we support our theoretical findings by showing that SPDHG with step sizes in~\eqref{eq: ss_rules} obtains linear convergence for problems satisfying metric subregularity. 

The problems we solve in this section satisfy metric subregularity (see~Section~\ref{sec: ms}). 
However, among these problems, only ridge regression is strongly convex-strongly concave, thus this is the only problem where existing linear convergence results from~\cite{chambolle2018stochastic} apply by using the algorithm SPDHG-$\mu$~\cite[Theorem 6.1]{chambolle2018stochastic}. 
We show that even in this case, when strong convexity constants are small, applying SPDHG can be more beneficial for some datasets.
SPDHG-$\mu$ is not applicable for other problems due to lack of strong convexity or strong concavity.
We also illustrate favorable behavior of SPDHG against state-of-the-art methods SVRG~\cite{johnson2013accelerating}, accelerated SVRG~\cite{zhou2018simple} and PDCD algorithms using smaller step sizes with dense data, such as~\cite{fercoq2019coordinate}.

Due to limited space, we include results with one or two datasets for each problem.
For SPDHG, as suggested in~\cite{chambolle2018stochastic}, we use uniform sampling of coordinates and the step sizes $\tau=\frac{0.99}{n \max_i \|A_i\|}$ and $\sigma_i = \frac{0.99}{\| A_i \|}$ for all problems.
For the other methods, we use the suggested theoretical step sizes in the respective papers and we do not fine tune any of the methods.
\subsection{Sparse recovery with basis pursuit}\label{sec: bp}
Basis pursuit is a fundamental problem in signal processing~\cite{chen2001atomic} with applications in machine learning~\cite{goldstein2018phasemax, arora2018compressed}:
\begin{equation}
\min_{x\in\mathbb{R}^d} \| x\|_1: Ax=b.
\end{equation}
Since basis pursuit is PLQ, metric subregularity holds.
In this section, we aim to illustrate the importance of step sizes, as mentioned in Section~\ref{sec: related_works} and~Table~\ref{tab:1} and to verify linear convergence of SPDHG.
We compare SPDHG with coordinate descent version of Vu-Condat algorithm from~\cite{fercoq2019coordinate}, which we refer to as FB-VC-CD.
Since~\cite{latafat2019new} requires duplication for an efficient implementation for this problem, it uses the same step sizes as~\cite{fercoq2019coordinate}.
Thus, we only compare with FB-VC-CD and note that the practical performance of~\cite{latafat2019new} is expected to be similar to FB-VC-CD with same step sizes.

We generate the data matrix $A$ with $n=500$ and $d=1000$ and entries follow a standard normal distribution.
We generate a covariance matrix $\Sigma_{i,j} = \rho^{\vert i - j \vert}$ with $\rho = 0.5$ and a sparse solution $x^\star$ with $100$ nonzero entries.
We then compute $b=Ax^\star$.

\begin{figure}[h]\label{fig: bp}
\begin{center}
\includegraphics[scale=.35]{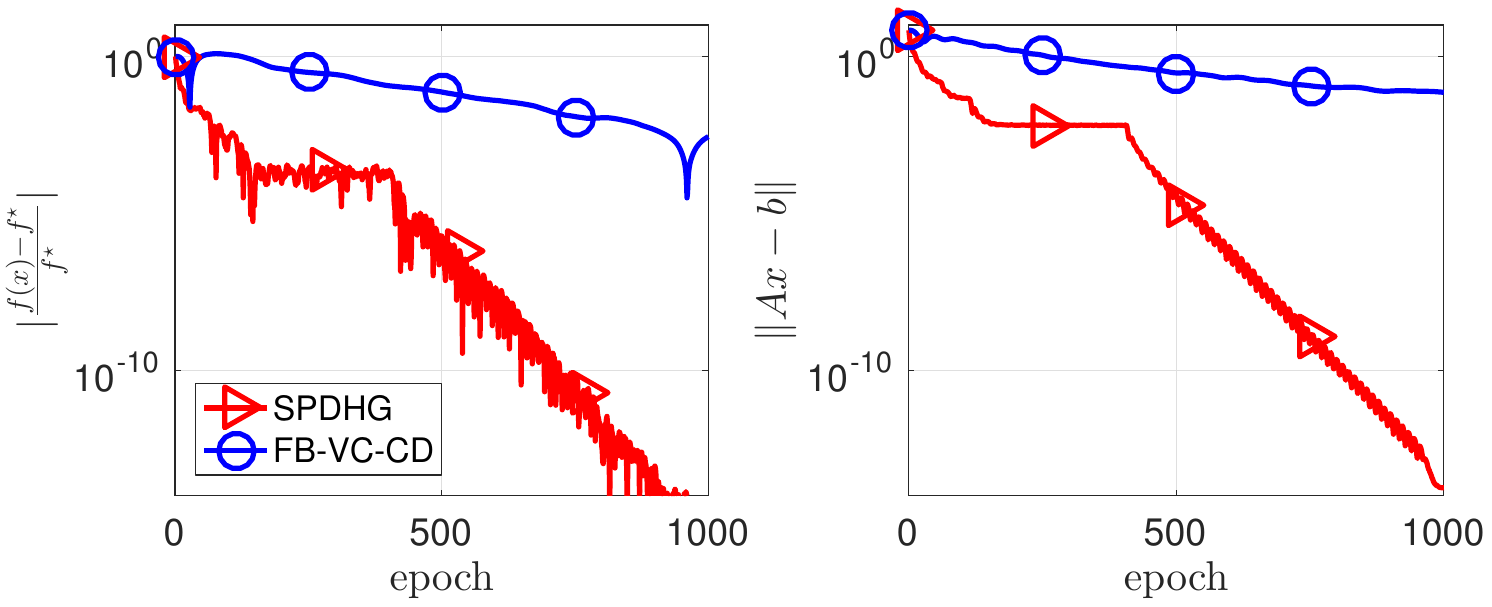}
\vspace{-.2cm}
\caption{Linear convergence of SPDHG for basis pursuit problem.}
\end{center}
\vskip -.2cm
\end{figure}

The analysis of SPDHG by~\cite{chambolle2018stochastic} shows $\mathcal{O}\left(1/k\right)$ rate on the Bregman distance to solution on the ergodic sequence whereas our analysis proves linear convergence on the last iterate.
On the other hand, FB-VC-CD is proven to have $\mathcal{O}\left( 1/\sqrt{k} \right)$ rate for this problem~\cite{fercoq2019coordinate}.
FB-VC-CD is tailored specially to exploit sparsity in the data.
However, the data is dense in this problem, which causes FB-VC-CD to use $n$ times smaller step sizes as shown in Figure~\ref{fig: bp}.
Because of this reason, FB-VC-CD exhibits a slow rate whereas SPDHG gets fast rate as predicted by our theoretical results.
\subsection{Lasso and ridge regression}
In this section we solve ridge regression and Lasso problems, formulated as
\begin{equation}\label{eq: ridge}
\min_{x\in\mathbb{R}^d} \frac{1}{2} \| Ax-b\|^2 + \frac{\lambda}{2} \| x\|^2, \text{ and, }
\min_{x\in\mathbb{R}^d} \frac{1}{2} \| Ax-b\|^2 + \lambda \| x\|_1,
\end{equation}
respectively.
In terms of structure,~\eqref{eq: ridge} is smooth and strongly convex, or equivalently, its Lagrangian is strongly convex-strongly concave.
For this problem class,~\cite{chambolle2018stochastic} showed linear convergence for the method SPDHG-$\mu$, which is a modified version of SPDHG using strong convexity and strong concavity constants for step sizes. 
In addition, SVRG and accelerated SVRG have linear convergence for this problem~\cite{xiao2014proximal, zhou2018simple, allen2017katyusha}.

\begin{figure}[h]\label{fig: ridge1}
\begin{center}
\includegraphics[scale=0.36]{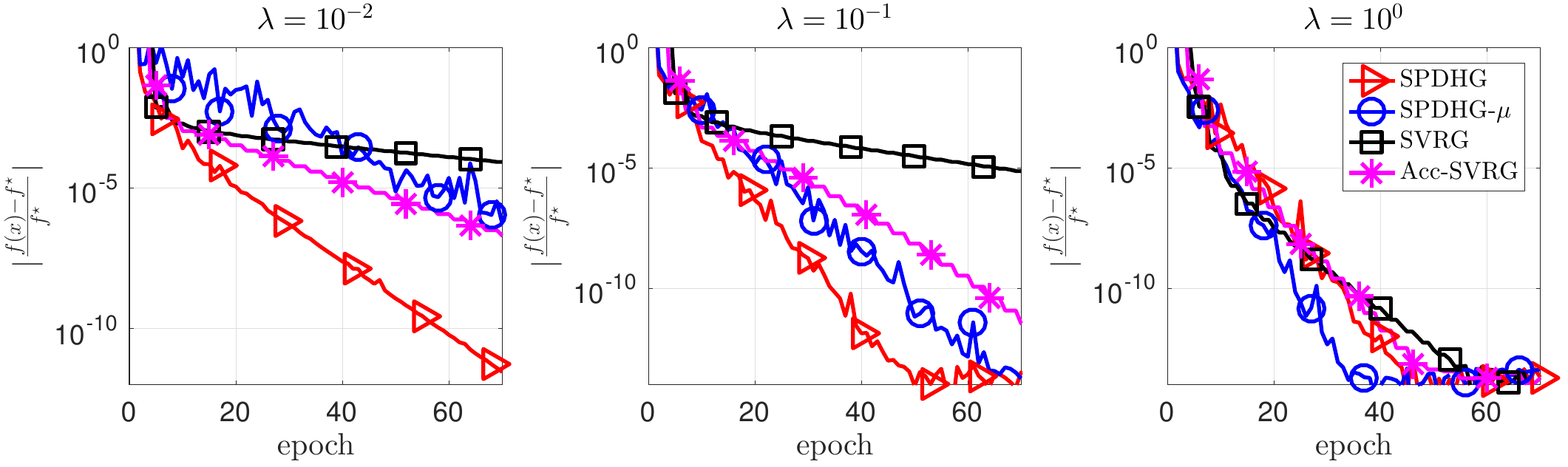}
\includegraphics[scale=0.36]{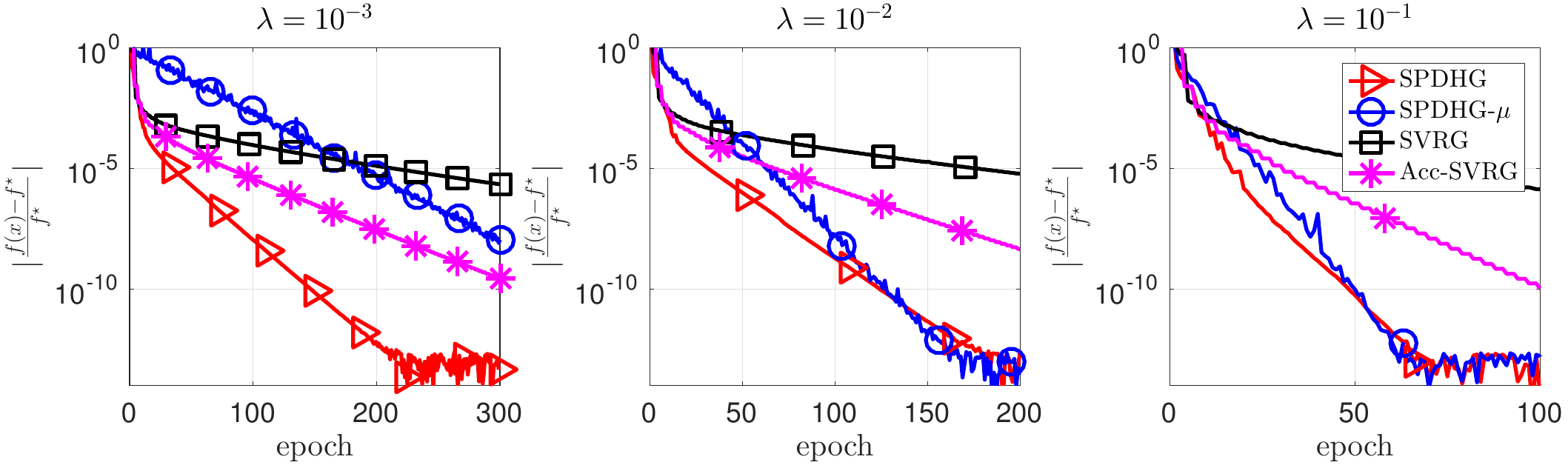}
\includegraphics[scale=0.36]{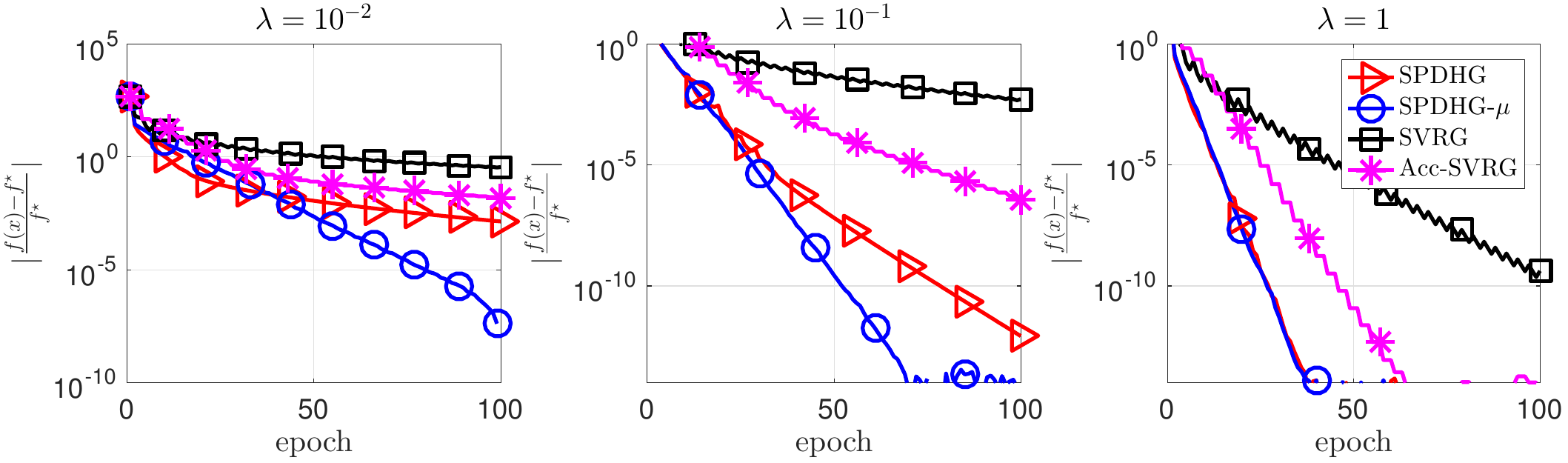}
\vspace{-0.2cm}
\caption{Ridge regression, first row: w8a, $n=49,749, d=300$; second row: sector, $n=6,412$, $d=55,197$; third row: YearPredictionMSD, $n=463,715, d=90$.}
\end{center}
\end{figure}
We use regression datasets from libsvm~\cite{CC01a}, perform row normalization, and use three different regularization parameters for each case. We compile the results in~\Cref{fig: ridge1} along with information on datasets and regularization parameters.

The aim in this experiment is not to argue that SPDHG gets the best performance in all cases since this is a very specific instance where most algorithms can get linear convergence.
Our goal is rather to show that even though our linear convergence results apply to a broad class of problems and SPDHG can apply to more general problems, it can still be competitive when compared to methods which are designed to exploit the structure of this specific setting.

When $n\geq d$, in~\Cref{fig: ridge1}, we see that for large regularization parameters, or equivalently, large strong convexity constants, SPDHG-$\mu$ is faster than SPDHG.
This is expected since SPDHG-$\mu$ is designed to use strong convexity as good as possible, whereas our result holds generically without any modifications on the algorithm.
Next, when strong convexity constant is small, SPDHG gets a faster linear rate than SPDHG-$\mu$, which suggests robustness of SPDHG over SPDHG-$\mu$ in this regime.
SPDHG also shows a more favorable performance than SVRG and accelerated SVRG.

When $n \leq d$, in~\Cref{fig: ridge1}, we see that SPDHG-$\mu$ shows faster convergence with small $\mu$. This seems intuitive, since in this case the strong convexity \emph{purely} comes from the regularization term. In this case, SPDHG-$\mu$ directly exploits this knowledge and shows a better performance.

We then solve Lasso~\eqref{eq: ridge}, for which SPDHG-$\mu$ does not apply and accelerated SVRG cannot get linear rates in general.
We compare with SVRG for varying regularization parameters, datasets with $n\leq d$ and $n\geq d$, and compile the results in~\Cref{fig: lasso}.
We observe that SPDHG converges linearly for this problem and exhibits a better practical performance than SVRG.
\begin{figure}[h]\label{fig: lasso}
\begin{center}
\includegraphics[scale=0.36]{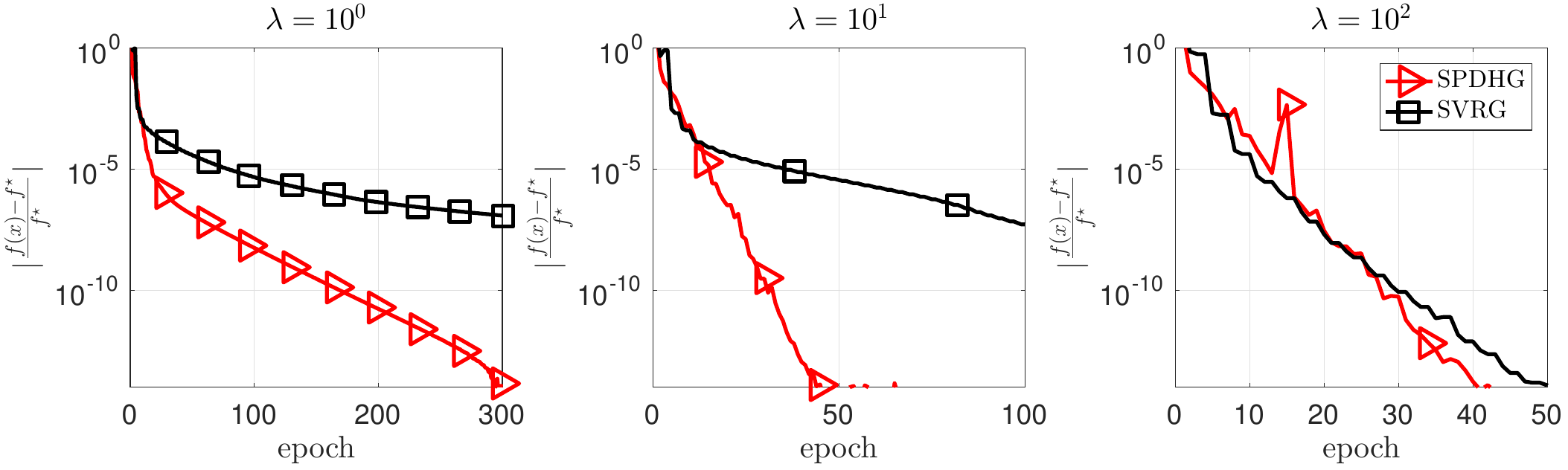}
\includegraphics[scale=0.36]{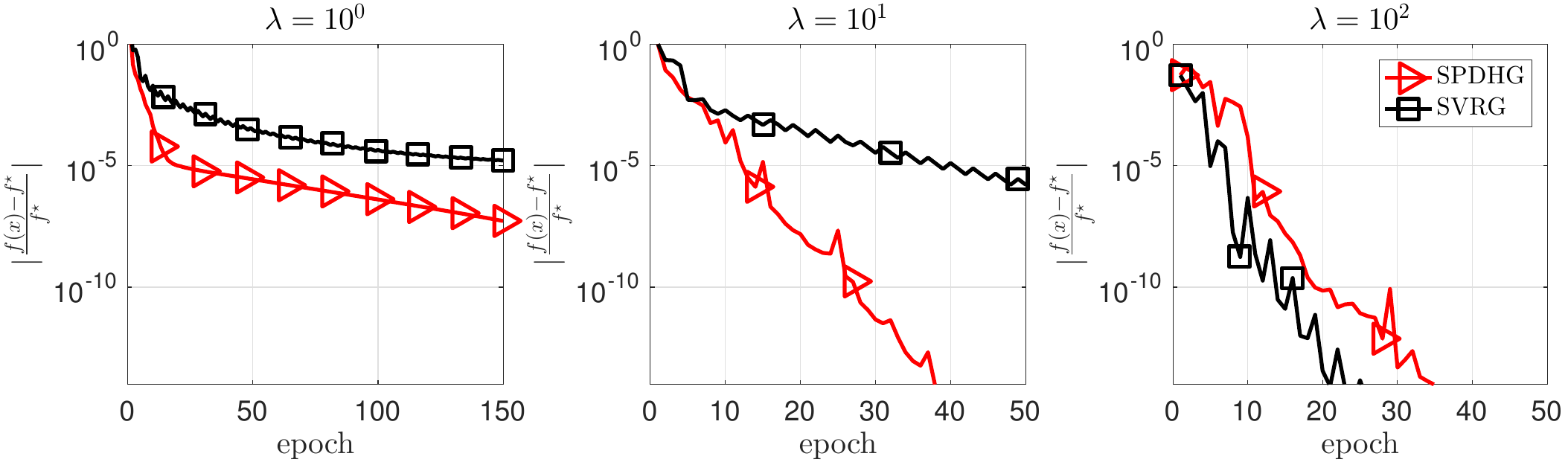}
\vspace{-0.2cm}
\caption{Lasso, top: mnist scale, $n=60,000, d=780$; bottom: rcv1.binary, $n=20,242$, $d=47,236$}
\end{center}
\vskip -0.3cm
\end{figure}
\section{Conclusions and open questions}\label{sec: conclusion}
In this section, we focus on the theory-practice gap mentioned in~Section~\ref{sec: lin_conv}, before~\Cref{rem: metric_sub}.
In particular, the main aim of~Section~\ref{sec: lin_conv} was to show that SPDHG obtains linear rate of convergence under general assumptions that hold for a large body of problems, with an agnostic step size selection.
A natural question is: How does this rate translate to practice?
For this purpose, we perform a controlled experiment on a simple problem
\begin{equation*}
\min_{x\in\mathbb{R}^d} \frac{\mu}{2} \| x \|^2: Ax=b,
\end{equation*}
with $d=n=10$.
After writing the KKT conditions, we obtain $F = \begin{bmatrix} \mu I & A^\top \\ A & 0 \end{bmatrix}$ and metric subregularity constant $\eta$ is the smallest eigenvalue of $F$ in absolute value.

For simplicity, we run PDHG, which is a specific case of SPDHG, and plot the predicted rate and the empirical rate in~\Cref{fig: last_fig}.
\begin{figure}[h]\label{fig: last_fig}
\begin{center}
\includegraphics[scale=0.4]{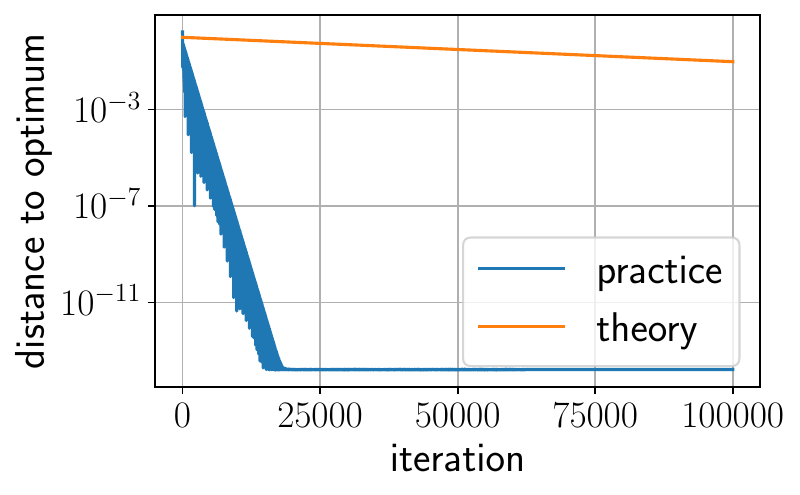}
\includegraphics[scale=0.4]{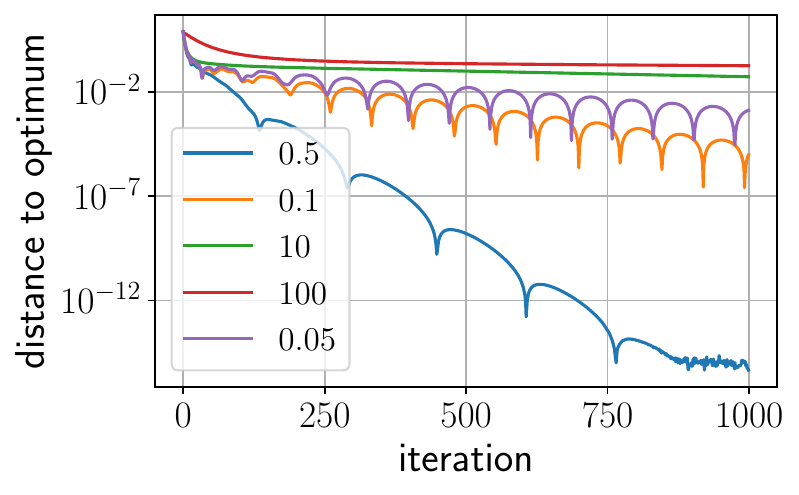}
\end{center}
\caption{left: empirical and theoretical linear rates, right: empirical rates with different $\mu$.}
\vskip -.2cm
\end{figure}
The resulting empirical rate is significantly faster than the worst case rate predicted by theory. 
We point out several possible explanations for this:
\begin{itemize}
\item Metric subregularity is too general to capture structures observed in practice.
\item Our step size choice is independent of metric subregularity constant, preventing optimizing the theoretical rate with respect to these quantities.
\end{itemize}

In fact, this phenomenon is not specific to our analysis and seems to be a common drawback of the existing analyses utilizing metric subregularity~\cite{latafat2019new}.
On this front, we observe that in our example, as $\mu$ increases, metric subregularity constant $\eta$ degrades.
However, as we see in the plot, the practical performance degrades when $\mu$ is either too big or too small (see~\Cref{fig: last_fig}).
This observation suggests that there might exist better regularity measures beyond metric subregularity that would help us derive better rates.
We believe that this is a promising future direction.

\section*{Acknowledgments} {\small Part of the work was done while A. Alacaoglu was at EPFL. This project has received funding from the European Research Council (ERC) under the European Union's Horizon 2020 research and innovation programme (grant agreement n° 725594 - time-data). This work was supported by the Swiss National Science Foundation (SNSF) under  grant number 407540\_167319. This project received funding from NSF Award  2023239;  DOE  ASCR  under  Subcontract  8F-30039  from  Argonne  National  Laboratory.

We are grateful to Panayotis Mertikopoulos, Ya-Ping Hsieh, and Yura Malitsky for discussions.
}

\section{Appendix}

\subsection{Proof of~\Cref{thm: conv_as}}\label{sec: full_proof_as_thm}

\begin{proof}
On~\eqref{eq: one_it_lem}, we pick $(x,y)=({x}^\star,y^\star)$ and by convexity, $D_g(x^k; z^\star) \geq 0$, $D_{f^\ast}(\hat{y}^{k+1}; z^\star) \geq 0$. Next, by using ${\Delta}^k = V_{k+1}(x^k - x^\star, y^{k+1}-y^\star)$, we
write~\eqref{eq: one_it_lem}
\begin{equation}\label{eq: before_rs_lemma}
\mathbb{E}_k \left[ {\Delta}^k \right] \leq \Delta^{k-1} - V(z^k - z^{k-1}).
\end{equation}
We denote $\bs q^k = (1\otimes x^k, 1\otimes y^k)$. By taking total expectation, summing~\eqref{eq: before_rs_lemma}, and using~\Cref{lem: defT}, we have $\sum_{k=1}^{\infty} \mathbb{E}\left[\| T(\bs q^{k-1}) - \bs q^{k-1} \|^2_{\bar{S}\bar{P}}\right] < +\infty$. We use Fubini-Tonelli theorem to exchange the infinite sum and the expectation 
 to obtain $\mathbb{E}\left[\sum_{k=0}^{\infty}\| T(\bs q^{k-1}) - \bs q^{k-1} \|^2_{\bar{S}\bar{P}}\right] < \infty$. Here, since $\sum_{k=0}^{\infty}\| T(\bs q^{k-1}) - \bs q^{k-1} \|^2_{\bar{S}\bar{P}}$ is nonnegative, we conclude that $\sum_{k=0}^{\infty}\| T(\bs q^{k-1}) - \bs q^{k-1} \|^2_{\bar{S}\bar{P}}$ is finite almost everywhere, which implies that $\| T(\bs q^{k-1}) - \bs q^{k-1} \|^2_{\bar{S}\bar{P}}$ converges to $0$ almost surely. Thus we established: $\exists \Omega_T$ with $\mathbb{P}(\Omega_T) = 1$ such that $\forall \omega \in \Omega_{T}$, we have $T(\bs q^k(\omega)) - \bs q^k(\omega) \to 0$.

We apply Robbins-Siegmund lemma~\cite[Theorem 1]{robbins1971convergence} on~\eqref{eq: before_rs_lemma} to get that a.s., $\Delta^k$ converges to a finite valued random variable and $V(z^k-z^{k-1})\to 0$. 
Consequently, by~\eqref{eq: v_lw_bd}, $\| y^k - y^{k-1}\|$ converges to $0$ a.s. 
Since a.s., $\Delta^k$ converges and $\| y^k - y^{k-1}\|$ converges to $0$, we have that $\| z^k - z^\star \|$ converges a.s.

In particular, we have shown that
\begin{equation}
\mathbb{P}\big( \omega \in \Omega\colon \lim_{k\to\infty} \|z_k(\omega)-z^\star\| \text{ exists.} \big) = 1.
\end{equation}
The probability $1$ set from which we select the trajectories is defined via $z^\star$. 
Let us denote the set
\begin{equation}
\Omega_{z^\star} = \Big\{ \omega \in \Omega\colon \lim_{k\to\infty} \|z_k(\omega)-z^\star \| \text{ exists.} \Big\}
\end{equation}
Thus our statement is actually: for each $z^\star\in\mathcal{Z}^\star$, there exists a set $\Omega_{z^\star}$ with probability 1, such that $\forall \omega\in\Omega_{z^\star}$, $\lim_{k\to\infty} \|z_k(\omega) - z^\star\|$ exists.

We now follow the arguments in~\cite[Proposition 2.3]{combettes2015stochastic},~\cite[Proposition 9]{bertsekas2011incremental},~\cite[Theorem 2]{iutzeler2013asynchronous},~\cite[Theorem 1]{fercoq2019coordinate} to strengthen this result.

Let us pick a set $\mathcal{C}$ which is a countable subset of $\mathrm{ri}(\mathcal{Z}^\star)$ that is dense in $\mathcal{Z}^\star$.
Let us denote the elements of $\mathcal{C}$ as $v_i$ for $i\in\mathbb{N}$.

We just proved that for all $v_i\in\mathcal{Z}^\star$, $\exists \Omega_{v_i}$ with $\mathbb{P}(\Omega_{v_i}) = 1$, such that $\forall \omega\in\Omega_{v_i}$, $\lim_{k\to\infty} \|z_k(\omega) - v_i\|$ exists.
Let us denote $\Omega_{\mathcal{C}} = \cap_{i\in\mathbb{N}} \Omega_{v_i}$.
As $\Omega_{\mathcal{C}}$ is the intersection of a countable number of sets of probability $1$, $\mathbb{P}(\Omega_{\mathcal{C}}) = 1$.

Next, we set $\tilde z\in\mathcal{Z}^\star$.
As $\mathcal{C}$ is dense in $\mathrm{ri}(\mathcal{Z}^\star)$, there exists a subsequence $v_{\phi(i)}$, where $\phi\colon \mathbb{N}\to\mathbb{N}$ is an increasing function, such that $v_{\phi(i)}\to \tilde{z}$.

We now pick $\omega\in \Omega_{\mathcal{C}}$ and study the existence of $\lim_{k\to\infty} \|z_k(\omega) - \tilde z\|$.
By triangle inequality, $\forall i\in\mathbb{N}$, 
\begin{align*}
\|z_k(\omega) - v_{\phi(i)}\| - \|v_{\phi(i)} - \tilde z\| &\leq \|z_k(\omega) - \tilde z\| \leq \|z_k(\omega) - v_{\phi(i)}\| + \|v_{\phi(i)} - \tilde z\|.
\end{align*}
Rearranging gives
\begin{equation*}
-\|v_{\phi(i)} - \tilde z\| \leq \|z_k(\omega) - \tilde z\|- \|z_k(\omega) - v_{\phi(i)}\| \leq \|v_{\phi(i)} - \tilde z\|.
\end{equation*}
As $\omega$ is chosen from $\Omega_{\mathcal{C}}$, and any element of $\Omega_{\mathcal{C}}$ is also an element of $\Omega_{v_i}$, we know that $\lim_{k\to\infty}\|z_k(\omega) - v_{\phi(i)}\|$ exists.
Moreover, recall that $v_{\phi(i)} \to \tilde z$.

We take limit as $k\to\infty$,
\begin{align*}
-\|v_{\phi(i)} - \tilde z\| &\leq \lim\inf_{k\to\infty} \|z_k(\omega) - \tilde z\| - \lim_{k\to\infty} \|z_k(\omega) - v_{\phi(i)}\| \\ 
&\leq \lim\sup_{k\to\infty} \| z_k(\omega) - \tilde z\| - \lim_{k\to\infty} \|z_k(\omega) - v_{\phi(i)}\|  \\
&\leq \|v_{\phi(i)} - \tilde z\|.
\end{align*}
As we take the limit along the subsequence defined by $\phi(i)$, we have $\lim_{i\to\infty} \|v_{\phi(i)} - \tilde z\| = 0$, which gives the equality of $\lim\inf$ and $\lim\sup$.

Thus, $\forall \omega \in \Omega_{\mathcal{C}}$ with $\mathbb{P}(\Omega_{\mathcal{C}})=1$ and $\forall \tilde z \in \mathcal{Z}^\star$, we have that  $\lim_{k\to\infty} \|z_k(\omega) - \tilde z\|$ exists.

We now pick $\omega\in\Omega_{\mathcal{C}}\cap \Omega_T$ and then as we have that $(z^k(\omega))_k$ is bounded, we denote by $\tilde{z}=(\tilde{x}, \tilde{y})$ one of its cluster points.
Then, we denote $\tilde{\bs q} = (1\otimes \tilde{x}, 1\otimes \tilde{y})$ and say that $\tilde{\bs q}$ is a cluster point of $(\bs q^k(\omega))_k$.

As $T(\bs q^k(\omega)) - \bs q^k(\omega) \to 0$, by continuity of $T$ we have $T(\tilde{\bs q})-\tilde{\bs q} \to 0$, therefore $\tilde{\bs q}$ is a fixed point of $T$.
We now use~\Cref{lem: defT} to argue that fixed points of $T$ which we denote as $(x_f(j), y_f(j))_{j=\{1, \dots, n\}}$ are such that $(x_f(j), y_f(j))\in\mathcal{Z}^\star, \forall j\in\{1, \dots, n\}$.
Since $\tilde{\bs q}$ is a fixed point of $T$, we conclude that $\tilde{z}\in\mathcal{Z}^\star$.

To sum up, we have shown that at least on some subsequence $z^k(\omega)$ converges to $\tilde{z}\in\mathcal{Z}^\star$.
Then, the result follows due to existence of the limit, proven earlier.
\end{proof}

\subsection{Proof of~\Cref{eq: lem_ergo_noexp}}\label{sec: lemma_for_erg}
\begin{proof}
As in~\cite{chambolle2018stochastic}, we use~\eqref{eq: haty_defin} to denote full dimensional updates.
By the definition of the proximal operator~\eqref{eq: prox_def} along with convexity of $f_i^\ast$ and $g$, we get, $\forall x \in\mathcal{X}$, $\forall y \in \mathcal{Y}$ and $\forall i = \{1, \dots, n\}$
\begin{align*}
g(x) &\geq g(x^{k}) + \langle x^{k}-x, A^\top \bar{y}^k \rangle + \frac{1}{2} \| x^{k}- x^{k-1} \|^2 _{\tau^{-1}} + \frac{1}{2} \| x^{k}-x\|^2_{\tau^{-1}}  \\  
&\qquad\qquad\qquad\qquad\qquad\qquad\qquad\qquad\qquad\qquad\qquad\qquad -\frac{1}{2} \| x - x^{k-1} \|^2_{\tau^{-1}},\\
f^\ast_i(y_i) &\geq f^\ast(\hat{y}^{k+1}_i) - \langle \hat{y}^{k+1}_i-y_i, A_i {x}^{k} \rangle + \frac{1}{2} \| \hat{y}^{k+1}_i- y^k_i \|^2 _{\sigma_i^{-1}} + \frac{1}{2} \| \hat{y}^{k+1}_i-y_i\|^2_{\sigma_i^{-1}} \\
&\qquad\qquad\qquad\qquad\qquad\qquad\qquad\qquad\qquad\qquad\qquad\qquad - \frac{1}{2} \| y_i - y^k_i \|^2_{\sigma_i^{-1}}.
\end{align*}
We sum the second inequality from $i=1$ to $n$ and add to the first inequality to obtain
\begin{align}
&0 \geq g(x^k) - g(x) + \langle x^k - x, A^\top \bar{y}^k \rangle + f^\ast(\hat{y}^{k+1}) - f^\ast(y) - \langle \hat{y}^{k+1}-y, Ax^k \rangle \notag \\
&+ \frac{1}{2} \left( -\|x^{k-1}-x\|^2_{\tau^{-1}} + \|x^k-x\|^2_{\tau^{-1}} + \| x^k - x^{k-1}\|^2_{\tau^{-1}} \right)  \notag \\
&+ \frac{1}{2} \big( -\|y^k-y\|^2_{\Dsigma^{-1}} + \|\hat{y}^{k+1}-y\|^2_{\Dsigma^{-1}} + \| \hat{y}^{k+1} - y^{k}\|^2_{\Dsigma^{-1}} \big). \label{eq: lem1_1}
\end{align}
We next note
\begin{align*}
&\mathcal{H}(x^k, \hat{y}^{k+1}; x, y) = g(x^k) + \langle Ax^k, y \rangle - f^\ast(y) - g(x) - \langle Ax, \hat{y}^{k+1} \rangle + f^\ast(\hat{y}^{k+1}),\notag \\
&\Delta_1 = \frac{1}{2} \big( - \| x^{k-1} - x \|^2_{\tau^{-1}} + \| x^k - x \|^2_{\tau^{-1}} + \| x^k - x^{k-1} \|^2_{\tau^{-1}} \big),\notag \\
&\Delta_2 = \frac{1}{2} \big( -\|y^k - y \|^2_{\Dsigma^{-1}} + \| \hat{y}^{k+1} - y \|^2_{\Dsigma^{-1}} + \| \hat{y}^{k+1} - y^k \|^2_{\Dsigma^{-1}}  \big).
\end{align*}
Then, we can write~\eqref{eq: lem1_1} as
\begin{align}\label{eq: new_erg_main_ineq1}
0 \geq \mathcal{H}(x^k, \hat{y}^{k+1}; x, y) + \langle A(x - x^k), \hat{y}^{k+1} - \bar{y}^k \rangle + \Delta_1 + \Delta_2.
\end{align}
We estimate by simple manipulations
\begin{align}
\mathcal{H}(x^k, &\hat{y}^{k+1}; x, y) = \mathcal{H}(x^k, {y}^{k+1}; x, y) + \langle Ax, y^{k+1}-\hat{y}^{k+1} \rangle + f^\ast(\hat{y}^{k+1}) - f^\ast(y^{k+1}) \notag\\
&- \left( f^\ast_{P^{-1} - I}(y^{k+1}) - f^\ast_{P^{-1}-I}(y^k) \right) + \left( f^\ast_{P^{-1} - I}(y^{k+1}) - f^\ast_{P^{-1}-I}(y^k) \right) \notag\\
&+\langle Ax, (P^{-1} - I)(y^{k+1} - y^k) \rangle - \langle Ax, (P^{-1}-I)(y^{k+1} - y^k) \rangle \notag\\
&=\mathcal{H}(x^k, {y}^{k+1}; x, y) + f^\ast(\hat{y}^{k+1}) - f^\ast(y^k) - (f^\ast_{P^{-1}}(y^{k+1})-f^\ast_{P^{-1}}(y^k)) \notag\\
&+\langle Ax, {y}^k - \hat{y}^{k+1} - P^{-1}(y^{k}-y^{k+1}) \rangle \notag \\
&+\left( f^\ast_{P^{-1} - I}(y^{k+1})-f^\ast_{P^{-1} - I}(y^k) \right) - \langle Ax, (P^{-1}-I)(y^{k+1} - y^k) \rangle \notag \\
&=\mathcal{H}(x^k, {y}^{k+1}; x, y) + f^\ast(\hat{y}^{k+1}) - f^\ast(y^k) - (f^\ast_{P^{-1}}(y^{k+1})-f^\ast_{P^{-1}}(y^k)) \notag\\
&+\langle Ax, {y}^k - \hat{y}^{k+1} - P^{-1}(y^{k}-y^{k+1}) \rangle +D_{f^\ast}^{P^{-1} - I}( y_{k+1}; y) - D_{f^\ast}^{P^{-1} - I}(y_{k}; y). \label{eq: gap_est}
\end{align}
By the definition of $\bar y^k$ in SPDHG, we have for the bilinear term in~\eqref{eq: new_erg_main_ineq1} that
\begin{align}
\langle A(x - x^k), \hat{y}^{k+1} &- \bar{y}^{k} \rangle = \langle A(x - x^k), \hat{y}^{k+1} - y^k - P^{-1}(y^k-y^{k-1}) \rangle \notag\\
&=\langle A(x - x^k), \hat{y}^{k+1} -  {y}^{k} \rangle - \langle A(x - x^{k-1}), P^{-1}( y^k - {y}^{k-1}) \rangle \notag\\ 
&- \langle A(x^{k-1} - x^{k}), P^{-1}(y^k - {y}^{k-1}) \rangle \notag\\
&=\langle A(x - x^k), P^{-1}( y^{k+1} - {y}^{k}) \rangle - \langle A(x - x^{k-1}), P^{-1}(y^k - {y}^{k-1}) \rangle \notag\\ 
&- \langle A(x^{k-1} - x^{k}), P^{-1}(y^k - {y}^{k-1}) \rangle \notag \\
&+ \langle A(x-x^k), \hat{y}^{k+1} - {y}^{k} -P^{-1}(y^{k+1} - y^{k}) \rangle. \label{eq: bilinear_est}
\end{align}
On $\Delta_2$, we add and subtract $\|y^k - y\|^2_{\Dsigma^{-1}P^{-1}} - \| y^{k+1} - y \|^2_{\Dsigma^{-1}P^{-1}}$ to get
\begin{equation}
-\Delta_2 = 
 -\frac{1}{2} \| y^{k+1} - y \|^2_{\Dsigma^{-1}P^{-1}} + \frac{1}{2} \| y^k - y \|^2_{\Dsigma^{-1}P^{-1}} -\frac{1}{2}  \| \hat{y}^{k+1} - y^k \|^2_{\Dsigma^{-1}}  + \epsilon^k,\label{eq: new_erg_delta2}
\end{equation}
where
\begin{align}
\epsilon^k &= \frac{1}{2} \Big[ \| y^k - y \|^2_{\Dsigma^{-1}} - \| \hat{y}^{k+1} - y \|^2_{\Dsigma^{-1}} - \big( \| y^k - y \|^2_{\Dsigma^{-1}P^{-1}} - \| y^{k+1} - y\|^2_{\Dsigma^{-1}P^{-1}} \big) \Big] \notag\\
&= \frac{1}{2} \Big[ \| y^k \|^2_{\Dsigma^{-1}} - \| \hat{y}^{k+1} \|^2_{\Dsigma^{-1}} -\big( \| y^k \|^2_{\Dsigma^{-1}P^{-1}} - \| y^{k+1}\|^2_{\Dsigma^{-1}P^{-1}} \big) \notag\\
&- 2\langle y, y^k - \hat{y}^{k+1} - P^{-1}( y^k - y^{k+1} ) \rangle_{\Dsigma^{-1}} \Big].
\end{align}
We use~\cref{eq: gap_est,eq: bilinear_est,eq: new_erg_delta2} in~\eqref{eq: new_erg_main_ineq1}, add and subtract $\frac{1}{2} \| y^k - y^{k-1} \|^2_{\Dsigma^{-1}P^{-1}}$ and use the definition $v^{k+1} = y^k - \hat{y}^{k+1} - P^{-1}(y^{k} - y^{k+1})$ from~\Cref{lem: new_erg_decoupling} to obtain
\begin{align}
\mathcal{H}(x^k, &{y}^{k+1}; x, y) \leq -\frac{1}{2} \| x^k - x \|^2_{\tau^{-1}} + \frac{1}{2} \| x^{k-1} - x \|^2_{\tau^{-1}} \notag \\
&- \langle A(x - x^k),  P^{-1}(y^{k+1}  - y^k) \rangle + \langle A(x - x^{k-1}), P^{-1}(y^k - y^{k-1}) \rangle \notag\\
&-\frac{1}{2} \| x^k - x^{k-1} \|^2_{\tau^{-1}} - \frac{1}{2} \| y^k - y^{k-1} \|^2_{\Dsigma^{-1}P^{-1}}   \notag \\
&- \langle A(x^k -x^{k-1}), P^{-1}(y^k - y^{k-1}) \rangle -\frac{1}{2} \| y^{k+1} - y \|^2_{\Dsigma^{-1}P^{-1}} \notag \\
&+ \frac{1}{2} \| y^k - y\|^2_{\Dsigma^{-1}P^{-1}} - \frac{1}{2}  \| \hat{y}^{k+1} - y^k \|^2_{\Dsigma^{-1}} + \frac{1}{2} \| y^{k}-y^{k-1}\|^2_{\Dsigma^{-1}P^{-1}}  \notag \\
&+\frac{1}{2} \left[ \| y^k \|^2_{\Dsigma^{-1}} - \| \hat{y}^{k+1} \|^2_{\Dsigma^{-1}} - \left(\|y^k\|^2_{\Dsigma^{-1}P^{-1}} - \| y^{k+1} \|^2_{\Dsigma^{-1}P^{-1}} \right) \right] \notag\\
&+f^\ast(y^k) - f^\ast(\hat{y}^{k+1}) -(f^\ast_{P^{-1}}(y^k) - f^\ast_{P^{-1}}(y^{k+1})) - \langle y, v^{k+1} \rangle_{\Dsigma^{-1}}\notag\\
&-\langle Ax^k, y^k - \hat{y}^{k+1} - P^{-1}(y^k - y^{k+1}) \rangle + D_{f^\ast}^{P^{-1}-I}(y_k; z) - D_{f^\ast}^{P^{-1}-I}(y_{k+1}; z).\label{eq: new_erg_main_ineq3}
\end{align}
The first result follows by the definitions of $V_k$ and $V$ from~\eqref{eq: v_vk_defin_main}, and definition of $\mathcal{E}^k$ from~\eqref{eq: e_def}.

On $\mathcal{E}^k$, we use $\mathbb{E}_k \left[ P^{-1}(y^{k} - y^{k+1}) \right] = y^k - \hat{y}^{k+1}$,
$\mathbb{E}_k\big[ f^\ast_{P^{-1}}(y^k) - f^\ast_{P^{-1}}(y^{k+1}) \big] = f^\ast(y^k) - f^\ast(\hat{y}^{k+1})$ and $\mathbb{E}_k \big[ \| y^{k+1} - y^k \|^2_{\Dsigma^{-1}P^{-1}} \big] = \| \hat y^{k+1} - y_k \|^2_{\Dsigma^{-1}}$
\begin{align*}
\mathbb{E}_k\left[ \mathcal{E}^k\right] &= -\frac{1}{2} \| \hat y^{k+1} - y^{k}\|^2_{\Dsigma^{-1}} + \frac{1}{2} \mathbb{E}_k \left[ \| {y}^{k+1}-y^k\|^2_{\Dsigma^{-1}P^{-1}}\right] \notag\\
&+\frac{1}{2} \left( \| y^k \|^2_{\Dsigma^{-1}} - \| \hat{y}^{k+1} \|^2_{\Dsigma^{-1}}\right) - \frac{1}{2}\mathbb{E}_k\left[\|y^k\|^2_{\Dsigma^{-1}P^{-1}} - \| y^{k+1} \|^2_{\Dsigma^{-1}P^{-1}} \right]  \notag\\
&+ f^\ast(y^k) - f^\ast(\hat {y}^{k+1})  - \mathbb{E}_k\left[f^\ast_{P^{-1}}(y^k) - f^\ast_{P^{-1}}(y^{k+1})\right]\notag\\
&-\langle Ax^k, y^k - \hat y^{k+1} - \mathbb{E}_k \left[ P^{-1}\left(y^k - y^{k+1}\right) \right] \rangle
=0.\notag
\end{align*}
\end{proof}

\subsection{Proof of~\Cref{lem: one_iteration}}\label{sec: pf_first_lem}
\begin{proof}
At step $k$ of SPDHG in Algorithm~\ref{alg: main_spdhg}, we select an index $i_k\in\{ 1, \dots, n \}$ randomly with probability $p_{i_k}$ and perform the following step on the dual variable
\begin{equation}\label{eq: lem1_stoc_dual_update}
y^{k+1}_{i_k} = \hat{y}^{k+1}_{i_k}, \text{ and } y^{k+1}_{i}=y^k_i, \forall i \neq i_k.
\end{equation}
For any $Y\in\mathcal{Y}$ that is measurable with respect to $\mathcal{F}_k$,~\eqref{eq: lem1_stoc_dual_update} immediately gives
\begin{align}
&\mathbb{E}_k [ y^{k+1} ] = P\hat{y}^{k+1} + \left( I-P\right) y^k, \label{eq: lem1_y_up1}\\
&\mathbb{E}_k \left[ \| y^{k+1}- Y \|^2_{\Dsigma^{-1}} \right] = \| \hat{y}^{k+1} - Y \|^2_{\Dsigma^{-1} P } +  \| y^k - Y \|^2_{\Dsigma^{-1} (I-P) }\label{eq: lem1_y_up2}.
\end{align}
A simple manipulation of~\eqref{eq: lem1_y_up1} and plugging in $Y = y$ and $Y=y_k$ in~\eqref{eq: lem1_y_up2} gives
\begin{align}
&\hat{y}^{k+1} = P^{-1} \mathbb{E}_k [y^{k+1}] - (P^{-1}-I) y^k \label{eq: y_exp_val}\\
&\| \hat{y}^{k+1} - y \|^2_{\Dsigma^{-1}} = \mathbb{E}_k \left[ \| y^{k+1}-y \|^2_{\Dsigma^{-1}P^{-1}}\right] - \| y^k - y\|^2_{\Dsigma^{-1}(P^{-1}-I)} \label{eq:expe_norms_y1} \\
&\| \hat{y}^{k+1} - y^k \|^2_{\Dsigma^{-1}} = \mathbb{E}_k \left[ \| y^{k+1}-y^k \|^2_{\Dsigma^{-1}P^{-1}} \right]. \label{eq:expe_norms_y2}
\end{align}
The first result follows by taking expectation of the result of~\Cref{eq: lem_ergo_noexp}, after using tower property and the above estimations.
On deriving the conclusion, we also use $D_g(x_k; z) + D_{f^\ast}(\hat y_{k+1}; z) = \mathcal{H}(x_k, \hat y_{k+1}; x, y)$ and \eqref{eq: gap_est}.

It is straightforward to prove~\eqref{eq: v_lw_bd} and~\eqref{eq: vk_lw_bd}. Since $y^k_{j} = y^{k-1}_j, \forall j\neq i_{k-1}$,
\begin{align}
\vert \langle Ax, &P^{-1}(y^k - y^{k-1}) \rangle \vert = \vert \langle A_{i_{k-1}}x, p_{i_{k-1}}^{-1}(y^k_{i_{k-1}} - y^{k-1}_{i_{k-1}}) \rangle \vert \notag \\
&\leq \|A_{i_{k-1}} x \| p_{i_{k-1}}^{-1} \| y^k_{i_{k-1}} - y^{k-1}_{i_{k-1}}\| \notag \\
&= \left(\tau^{1/2} \sigma_{i_{k-1}}^{1/2} p_{i_{k-1}}^{-1/2} \| A_{i_{k-1}} \|\right) \tau^{-1/2} \|x\| p_{i_{k-1}}^{-1/2}\sigma_{i_{k-1}}^{-1/2} \| y^k_{i_{k-1}} - y^{k-1}_{i_{k-1}}\| \notag \\
&\leq \gamma\left( \tau^{-1/2} \|x\| p_{i_{k-1}}^{-1/2}\sigma_{i_{k-1}}^{-1/2} \| y^k_{i_{k-1}} - y^{k-1}_{i_{k-1}}\| \right) \notag \\
&\leq \frac{\gamma}{2}\left( \|x\|^2_{\tau^{-1}} +  \| y^k_{i_{k-1}} - y^{k-1}_{i_{k-1}}\|^2_{p_{i_{k-1}}^{-1}\sigma_{i_{k-1}}^{-1}} \right) \notag \\
&= \frac{\gamma}{2}\left( \|x\|^2_{\tau^{-1}} +  \| y^k - y^{k-1}\|^2_{\Dsigma^{-1}P^{-1}} \right),\label{eq: pf_inner_product}
\end{align}
where the last step is due to $y^k_{j} = y^{k-1}_j, \forall j\neq i_{k-1}$.
Plugging in~\eqref{eq: pf_inner_product} into the definitions of $V(z^k - z^{k-1})$ and $V_k(z)$ is sufficient to prove~\eqref{eq: v_lw_bd} and~\eqref{eq: vk_lw_bd}.
\end{proof}

%%%%%%%%%%%%%%%%%%TABLE
%%%%%%%%%%%%%%%%%%TABLE
%%%%%%%%%%%%%%%%%%TABLE
\begin{landscape}
\centering
\begin{table*}[t]
  \centering
  \begin{tabular}{| l | l | c | l | l |}
 \hline
      & Linear convergence & \makecell{Rates with \\only convexity} & \makecell{Step sizes for
      \\  linear convergence*} \\ \hline
    \cite{chambolle2018stochastic} & \makecell{$f_i^\ast:$ $\mu_i$-s.c. \\ $g:$ $\mu_g$-s.c.} & \makecell{{Ergodic $\mathcal{O}\left( \frac{1}{k} \right)$for}\\ {Bregman distance to solution}}  & $\|A_i\|, \mu_i, \mu_g$ \\ \hline
    \cite{zhang2017stochastic} & \makecell{$f_i^\ast:$ $\mu_i$-s.c. \\ $g:$ $\mu_g$-s.c.} & \makecell{Nonergodic $\mathcal{O}\left( \frac{1}{k} \right)$with\\ bounded dual domain and fixed horizon} & $\|A_i\|, \mu_i, \mu_g$ \\ \hline
        \cite{fercoq2019coordinate} & \makecell{$f_i^\ast:$ $\mu_i$-s.c. \\ $g:$ $\mu_g$-s.c.} & Randomly selected iterate $\mathcal{O}\left( \frac{1}{\sqrt{k}} \right)$  & {$n^2\tau\sigma_i\|A_i\|^2 < 1$} \\ \hline
    \cite{latafat2019new} & \cellcolor{lightgray}{$F$ is MS (see~\eqref{eq: kkt})}& $\times$ & {$n^2\tau\sigma_i\|A_i\|^2 < 1$} \\ \hline
    This paper & \cellcolor{lightgray}{{$F$ is MS (see~\eqref{eq: kkt})}} & \cellcolor{lightgray}{{Ergodic $\mathcal{O}\left( \frac{1}{k} \right)$ for primal-dual gap, objective values and feasibility }} & \cellcolor{lightgray}{$n\tau\sigma_i\|A_i\|^2 < 1$} \\
    \hline
  \end{tabular}
  \caption{\small{Comparison of primal dual coordinate descent methods. s.c. denotes strongly convex, MS denotes metrically subregular. Please see Section~\ref{sec: related_works} for a thorough comparison. Please see Section~\ref{sec: prelim} for comparison of MS and s.c. assumptions. $^\ast$Step sizes are for optimization with a potentially dense $A$ matrix and uniform sampling: $p_i = 1/n$.}}
  \label{tab:1}
\end{table*}

\begin{table*}[t]
  \centering
  \begin{tabular}{| c | c | c | c |}
 \hline
      & a.s. convergence & Linear convergence & Ergodic rates \\ \hline
    \cite{chambolle2018stochastic} & \makecell{$D_h(z^k; z^\star) \to 0$, for any $z^\star$ \\ where $D_h$ is Bregman \\ distance generated by \\ $h(z)=f^\ast(y)+ g(x)$} & \makecell{Assumption: $f_i^\ast, g$ s.c.\\ step sizes depending on $\mu_i, \mu_g$}  & \makecell{$D_h(z^k_{av}; z^\star)=\mathcal{O}(1/k)$ } \\ \hline
    This paper & $z^k \to z^\star$, for some $z^\star$. & \makecell{Assumption: $F$ in~\eqref{eq: kkt} is MS \\ Step sizes: $n\tau\sigma_i \|A_i\|^2 < 1^\ast$} & \makecell{ $\bullet$ Restricted primal-dual gap
    \\ $\mathbb{E}\left[G_{\mathcal{B}}(x^k_{av}, y^k_{av})\right] = \mathcal{O}(1/k)$ \\$\bullet$ $f$ is Lipschitz$^{\dagger}$ \\ $ \mathbb{E}\left[\vert P(x^k_{av}) - P(x^\star) \vert\right] = \mathcal{O}(1/k)$ \\ $\bullet$$f(\cdot)=\delta_b(\cdot)$ \\ $\mathbb{E}\left[\vert g(x^k_{av}) - g(x^\star) \vert\right] = \mathcal{O}(1/k)$ \\ $\mathbb{E}\left[\|Ax^k_{av}-b\|\right] =\mathcal{O}(1/k)$} \\
    \hline
  \end{tabular}
  \caption{\small{Comparison of our results and previous results on SPDHG. $^\ast$Step size condition is for uniform sampling: $p_i = 1/n$. $^\dagger$In this case $P(x) := f(Ax) + g(x)$. }}
  \label{tab:2}
\end{table*}
\end{landscape}

%%%%%%%%%%%%%%%%%%TABLE
%%%%%%%%%%%%%%%%%%TABLE
%%%%%%%%%%%%%%%%%%TABLE

\bibliographystyle{plain}
\bibliography{literature_spdhg}

\end{document}